\theoremstyle{plain}
\newtheorem{theoreme}{Theorem}[section]
\newtheorem{prop}[theoreme]{Proposition}
\newtheorem{cor}[theoreme]{Corollary}
\theoremstyle{remark}
\newtheorem{definition}[theoreme]{Definition}
\newtheorem{ex}[theoreme]{Example}
\newtheorem{remark}[theoreme]{Remark}
\newtheorem{c-ex}[theoreme]{Counter-example}
\date{}
\DeclareMathOperator{\supp}{supp}
\DeclareMathOperator{\Proj}{Proj}
\DeclareMathOperator{\ct}{ct}
\DeclareMathOperator{\ty}{type}
\author[O. Tout]{Omar Tout}
\address{Instytut Matematyczny, Polska Akademia Nauk,
ul. Śniadeckich 8,
00-656 Warszawa,
Poland}
\email{otout@impan.pl}
\title[$k$-partial permutations]{$k$-partial permutations and the center of the \\wreath product $\mathcal{S}_k\wr \mathcal{S}_n$ algebra}
\keywords{Wreath product of symmetric groups, $k$-partial permutations, structure coefficients, character theory, shifted symmetric functions}
\subjclass[2010]{ Primary 05E15; Secondary 05E05, 05E10, 20C30.}
\thanks{This research is supported by Narodowe Centrum Nauki, grant number 2017/26/A/ST1/00189.}
\begin{document}
\maketitle

\begin{abstract} We generalize the concept of partial permutations of Ivanov and Kerov and introduce $k$-partial permutations. This allows us to show that the structure coefficients of the center of the wreath product $\mathcal{S}_k\wr \mathcal{S}_n$ algebra are polynomials in $n$ with non-negative integer coefficients. We use a universal algebra $\mathcal{I}_\infty^k$ which projects on the center $Z(\mathbb{C}[\mathcal{S}_k\wr \mathcal{S}_n])$ for each $n.$ We show that $\mathcal{I}_\infty^k$ is isomorphic to the algebra of shifted symmetric functions on many alphabets.
\end{abstract}

\section{Introduction}
For a positive integer $n,$ we denote by $\mathcal{S}_n$ the symmetric group on the set $[n] := \lbrace 1,2,\ldots ,n\rbrace.$ The center of the symmetric group algebra, usually denoted $Z(\mathbb{C}[\mathcal{S}_n]),$ is the algebra over $\mathbb{C}$ generated by the conjugacy classes of the symmetric group $\mathcal{S}_n.$ The cycle-type of a permutation $\omega\in \mathcal{S}_n$ is the partition of $n$ obtained from the lengths of the cycles in the decomposition of $\omega$ as product of disjoint cycles. It is well known that the conjugacy class of a permutation $x\in \mathcal{S}_n$ is the set of all permutations $y$ that have the same cycle-type as $x.$ Thus the family $(\mathbf{C}_\lambda)_\lambda$ indexed by the partitions of $n$ and defined by  
$$\mathbf{C}_\lambda:=\sum \omega$$
where the sum runs over all the permutations $\omega\in \mathcal{S}_n$ with cycle-type $\lambda$ is a linear basis for $Z(\mathbb{C}[\mathcal{S}_n]).$ The structure coefficients $c_{\lambda\delta}^{\rho}$ are the non-negative integers defined by the following product in $Z(\mathbb{C}[\mathcal{S}_n])$
$$\mathbf{C}_\lambda \mathbf{C}_\delta=\sum_{\rho \text{ partition of } n} c_{\lambda\delta}^{\rho} \mathbf{C}_\rho.$$ The basic way to compute these coefficients is to fix a permutation $z$ of cycle-type $\rho$ and count the pairs $(x,y)$ such that $x$ has cycle-type $\lambda,$ $y$ has cycle-type $\delta$ and $xy=z.$ This method was used by Katriel and Paldus in \cite{katriel1987explicit} to obtain the complete expression for the product of the class of transpositions $\mathbf{C}_{(2,1^{n-2})}$ with an arbitrary class $\mathbf{C}_\delta.$ However, this method is not appropriate when we consider more complicated classes. In \cite{FaharatHigman1959}, Farahat and Higman showed that the coefficients 
$c_{\lambda\delta}^\rho$ are polynomials in $n$ when the partitions $\lambda , \delta$ and $\rho$ are fixed partitions, completed with parts equal to $1$ to get partitions of $n.$ By introducing partial permutations in \cite{Ivanov1999}, Ivanov and Kerov gave a combinatorial proof to this result. They built a combinatorial algebra which projects onto the center of the symmetric group algebra $Z(\mathbb{C}[\mathcal{S}_n])$ for each $n.$ They showed that this algebra is isomorphic to the algebra of shifted symmetric functions.\\

In \cite{Tout2018}, we introduced a group of blocks permutations, denoted $\mathcal{B}_{kn}^k,$ which is isomorphic to the wreath product $\mathcal{S}_k\wr \mathcal{S}_n.$ We showed that its conjugacy classes can be indexed by families of partitions indexed by the partitions of $k$ and we proved, using the general framework given in \cite{Tout2017}, that the structure coefficients of $Z(\mathbb{C}[\mathcal{B}_{kn}^k])$ are polynomials in $n$ under certain conditions. When $k=1,$ $\mathcal{B}_{n}^1$ is the symmetric group $\mathcal{S}_n$ and if $k=2,$ $\mathcal{B}_{2n}^2$ is the hyperoctahedral group $\mathcal{H}_n.$ Thus, our outcome in \cite{Tout2018} can be seen as a generalization of the result of Farahat and Higman in \cite{FaharatHigman1959} and our result in \cite{Tout2017} giving a polynomiality property for the structure coefficients of the center of the hyperoctahedral group algebra.\\ 

In \cite{wang2004farahat}, Wang studied the centers of group algebras of wreath products $G \wr \mathcal{S}_n,$ for any finite group $G.$ He proved using the Farahat-Higman approach that the structure constants are polynomials in $n.$ The goal of this paper is to give a proof using Ivanov-Kerov approach to the polynomiality property of the structure coefficients of $Z(\mathbb{C}[\mathcal{B}_{kn}^k]).$ For this reason, we generalize the concept of partial permutations introduced by Ivanov and Kerov in \cite{Ivanov1999}. We define the term of $k$-partial permutations. These are block permutations defined on appropriate sets. When $k=1,$ a $1$-partial permutation will be a partial permutation in the sense defined by Ivanov and Kerov. Using $k$-partial permutations, we build a combinatorial algebra $\mathcal{I}_\infty^k$ which projects onto the center of the group $\mathcal{B}_{kn}^k$ algebra for each $n.$ We give two filtrations on $\mathcal{I}_\infty^k$ and we show that it is isomorphic to the algebra of shifted symmetric functions on $\wp(k)$ alphabets, where $\wp(k)$ is the number of partitions of the integer $k.$\\

The paper is organized as follows. In Section \ref{sec_2}, we give necessary definitions of partitions and we review the results concerning the conjugacy classes of the group $\mathcal{B}_{kn}^k$ presented in \cite{Tout2018}. Then, we introduce the notion of $k$-partial permutation in Section \ref{sec:k-partial}. We study some actions of the group $\mathcal{B}_{kn}^k$ on the set of $k$-partial permutations of $n$ and we build our main tool the combinatorial algebra $\mathcal{I}_\infty^k$ which projects on $Z(\mathbb{C}[\mathcal{B}_{kn}^k])$ for each $n.$ Next in Section \ref{sec:str_coef}, we prove our main result, Theorem \ref{Main_Theorem}, that gives a polynomiality property in $n$ for the structure coefficients of the algebra $Z(\mathbb{C}[\mathcal{B}_{kn}^k]).$ In addition, we give two filtrations on the algebra $\mathcal{I}_\infty^k$ and we provide explicit expressions for the structure coefficients in special cases of $k=1,$ $k=2$ and $k=3.$ In the last section, we show that the algebra $\mathcal{I}_\infty^k$ is isomorphic to the algebra $\mathcal{A}^{k*}$ of shifted symmetric functions on $\wp(k)$ alphabets.

\section{Conjugacy classes of the wreath product of symmetric groups}\label{sec_2}

In \cite{Tout2018} we introduced the group $\mathcal{B}_{kn}^{k}$ and we showed that it is isomorphic to the wreath product $\mathcal{S}_k\wr \mathcal{S}_n$ of the symmetric group $\mathcal{S}_k$ by the symmetric group $\mathcal{S}_n.$ In this section we will review all necessary definitions and results concerning this group.

\subsection{Partitions} A \textit{partition} $\lambda$ is a weakly decreasing list of positive integers $(\lambda_1,\ldots,\lambda_l)$ where $\lambda_1\geq \lambda_2\geq\ldots \geq\lambda_l\geq 1.$ The $\lambda_i$ are called the \textit{parts} of $\lambda$; the \textit{size} of $\lambda$, denoted by $|\lambda|$, is the sum of all of its parts. If $|\lambda|=n$, we say that $\lambda$ is a partition of $n$ and we write $\lambda\vdash n$. The number of parts of $\lambda$ is denoted by $l(\lambda)$. We will also use the exponential notation $\lambda=(1^{m_1(\lambda)},2^{m_2(\lambda)},3^{m_3(\lambda)},\ldots),$ where $m_i(\lambda)$ is the number of parts equal to $i$ in the partition $\lambda.$ In case there is no confusion, we will omit $\lambda$ from $m_i(\lambda)$ to simplify our notation. If $\lambda=(1^{m_1(\lambda)},2^{m_2(\lambda)},3^{m_3(\lambda)},\ldots,n^{m_n(\lambda)})$ is a partition of $n$ then $\sum_{i=1}^n im_i(\lambda)=n.$ We will dismiss $i^{m_i(\lambda)}$ from $\lambda$ when $m_i(\lambda)=0,$ for example, we will write $\lambda=(1^2,3,6^2)$ instead of $\lambda=(1^2,2^0,3,4^0,5^0,6^2,7^0).$ If $\lambda$ and $\delta$ are two partitions, we define the \textit{union} $\lambda \cup \delta$ and subtraction $\lambda \setminus \delta$ (if exists) as the following partitions:
$$\lambda \cup \delta=(1^{m_1(\lambda)+m_1(\delta)},2^{m_2(\lambda)+m_2(\delta)},3^{m_3(\lambda)+m_3(\delta)},\ldots).$$
$$\lambda \setminus \delta=(1^{m_1(\lambda)-m_1(\delta)},2^{m_2(\lambda)-m_2(\delta)},3^{m_3(\lambda)-m_3(\delta)},\ldots) \text{ if $m_i(\lambda)\geq m_i(\delta)$ for any $i.$ }$$
A partition is called \textit{proper} if it does not have any part equal to 1. The proper partition associated to a partition $\lambda$ is the partition $\bar{\lambda}:=\lambda \setminus (1^{m_1(\lambda)})=(2^{m_2(\lambda)},3^{m_3(\lambda)},\ldots).$ 

If $\lambda$ is a partition of $r<n,$ we can extend $\lambda$ to a partition of $n$ by adding $n-r$ parts equal to one, the new partition of $n$ will be denoted $\underline{\lambda}_n:$
$$\underline{\lambda}_n:=\lambda\cup (1^{n-|\lambda|}).$$

\subsection{Conjugacy classes of the symmetric group $\mathcal{S}_n$} The \textit{cycle-type} of a permutation of $\mathcal{S}_n$ is the partition of $n$ obtained from the lengths of the cycles that appear in its decomposition into a product of disjoint cycles. For example, the permutation $(2,4,1,6)(3,8,10,12)(5)(7,9,11)$ of $\mathcal{S}_{12}$ has cycle-type $(1,3,4^2).$ In this paper we will denote the cycle-type of a permutation $\omega$ by $\ct(\omega).$ It is well known that two permutations of $\mathcal{S}_n$ belong to the same conjugacy class if and only if they have the same cycle-type. Thus the conjugacy classes of the symmetric group $\mathcal{S}_n$ can be indexed by partitions of $n.$ If $\lambda=(1^{m_1(\lambda)},2^{m_2(\lambda)},3^{m_3(\lambda)},\ldots,n^{m_n(\lambda)})$ is a partition of $n,$ we will denote by $C_\lambda$ the conjugacy class of $\mathcal{S}_n$ associated to $\lambda:$
$$C_\lambda:=\lbrace \sigma\in \mathcal{S}_n \text{ $\mid$ } \ct(\sigma)=\lambda \rbrace.$$
The cardinality of $C_\lambda$ is given by:
$$|C_\lambda|=\frac{n!}{z_\lambda},$$
where
$$z_\lambda:=\prod_{r=1}^{n} r^{m_r(\lambda)}m_r(\lambda)!$$ 


\subsection{Conjugacy classes of the group $\mathcal{B}_{kn}^k$} We recall the definition of the group $\mathcal{B}_{kn}^k$ as given in \cite{Tout2018}. If $i$ and $k$ are two positive integers, we denote by $p_{k}(i)$ the following set of size $k:$ 
$$p_k(i):=\lbrace (i-1)k+1, (i-1)k+2, \cdots , ik\rbrace.$$ 

The set $p_k(i)$ will be called a $k$-tuple. The group $\mathcal{B}_{kn}^{k}$ is the subgroup of $\mathcal{S}_{kn}$ formed by permutations that send each set of the form $p_{k}(i)$ to another with the same form:
$$\mathcal{B}_{kn}^{k}:=\lbrace w \in \mathcal{S}_{kn}; \ \forall \ 1 \leq r \leq n, \ \exists \ 1 \leq r' \leq n \text{ such that } w(p_{k}(r))=p_{k}(r')\rbrace.$$
The order of the group $\mathcal{B}_{kn}^{k}$ is equal to
$$|\mathcal{B}_{kn}^{k}|=(k!)^nn!$$

In particular $\mathcal{B}_{n}^1$ is the symmetric group $\mathcal{S}_n$ and $\mathcal{B}_{2n}^2$ is the hyperoctahedral group $\mathcal{H}_n$ on $2n$ elements. In fact, as shown in \cite{Tout2018}, $\mathcal{B}_{kn}^{k}$ is isomorphic to the wreath product $\mathcal{S}_k\wr \mathcal{S}_n.$

For a permutation $\omega\in \mathcal{B}_{kn}^{k}$ and a partition $\rho=(\rho_1,\ldots,\rho_l)$ of $k$ we will construct the partition $\omega(\rho)$ as follows. First decompose $\omega$ as a product of disjoint cycles. Consider the collection of cycles $C_1,\ldots, C_l$ such that $C_1$ contains $\rho_1$ elements of a certain $k$-tuple $p_k(i),$ $C_2$ contains $\rho_2$ elements of the same $k$-tuple $p_k(i),$ etc. Now add the part $m$ to $\omega(\rho)$ if $m$ is the number of $k$-tuples that form the cycles $C_1,\ldots, C_l.$


\begin{ex}
Consider the following permutation, written in two-lines notation, $\omega$ of $\mathcal{B}^3_{15}$ 

$$\begin{pmatrix}
1&2&3&\mid&4&5&6&\mid&7&8&9&\mid&10&11&12&\mid&13&14&15&\mid&16&17&18\\
12&10&11&\mid&13&14&15&\mid&7&8&9&\mid&1&2&3&\mid&5&4&6&\mid&17&18&16
\end{pmatrix}.
$$
We put the sign $\mid$ after each three elements to emphasize that we are working in the case $k=3.$ The decomposition of $\omega$ into product of disjoint cycles is:
$$\omega=(1,12,3,11,2,10)(4,13,5,14)(6,15)(7)(8)(9)(16,17,18).$$
The first cycle $(1,12,3,11,2,10)$ contains all the elements of $p_3(1)$ thus it contributes to $\omega(3).$ In it, there are two $3$-tuples namely $p_3(1)$ and $p_3(4).$ Thus we should add a part $2$ to the partition $\omega(3).$ In the same way the cycle $(16,17,18)$ will add a part $1$ to $\omega(3)$ to become the partition $(2,1).$ By looking to the cycles $(4,13,5,14)(6,15)$ we see that $4$ and $5$ belong to the same cycle while $6$ belongs to the other, thus these cycles will contribute to $\omega(2,1).$ Since these cycles are formed by the two $3$-tuples $p_3(2)$ and $p_3(5),$ we have $\omega(2,1)=(2).$ The remaining cycles $(7)(8)(9)$ give $\omega(1,1,1)=(1).$
\end{ex}

\begin{definition} By a \emph{ family of partitions } we will always mean a family of partitions $\Lambda=(\Lambda(\lambda))_\lambda$ indexed by the partitions $\lambda$ of $k$ that satisfies:
$$|\Lambda|:=\sum_{\lambda\vdash k}|\Lambda(\lambda)|=n.$$
\end{definition}
 
\begin{definition}
If $\omega\in \mathcal{B}_{kn}^k,$ define $\ty(\omega)$ to be the following family of partitions
$$\ty(\omega):=(\omega(\rho))_{\rho\vdash k}.$$
\end{definition}

In \cite{Tout2018}, we showed the following two important results:
\begin{enumerate}
\item[1-] Let $\omega\in \mathcal{B}_{kn}^k$ and $p_\omega$ be the permutation of $n$ defined by $p_\omega(i)=j$ whenever $\omega(p_k(i))=p_k(j).$ Then we have:
$$\bigcup_{\rho\vdash k}\omega(\rho)=\ct(p_\omega) \text{ and } \sum_{\rho\vdash k}|\omega(\rho)|=n.$$
\item[2-] The conjugacy classes of the group $\mathcal{B}_{kn}^k$ are indexed by families of partitions and the associated conjugacy class for a given family of partitions $\Lambda$ is:
$$C_\Lambda:=\lbrace \omega\in \mathcal{B}_{kn}^k \text{ such that } \ty(\omega)=\Lambda\rbrace.$$ 
In addition:
$$|C_\Lambda|=\frac{n!(k!)^n}{Z_\Lambda}$$
where 
$$Z_\Lambda:=\displaystyle \prod_{\lambda\vdash k}z_{\Lambda(\lambda)}z_\lambda^{l(\Lambda(\lambda))}.$$
\end{enumerate}

\section{The algebra of $k$-partial permutations}\label{sec:k-partial}

In \cite{Ivanov1999}, Ivanov and Kerov introduced a useful tool called partial permutation to give a combinatorial proof to the polynomiality property of the center of the symmetric group algbera obtained by Farahat and Higman in \cite{FaharatHigman1959}. They showed that the algebra of partial permutations that are invariant under some action of the symmetric group is isomorphic to the algebra of shifted symmetric functions. In \cite{Tout2018}, we used the general framework built in \cite{Tout2017} to generalize the result of Farahat and Higman to wreath product of symmetric groups. We proved the polynomiality property for the structure coefficients of the center of the group $\mathcal{B}_{kn}^k$ algebra. The goal of this section is to generalize the concept of partial permutations in order to obtain a combinatorial proof for this result.

\subsection{$k$-partial permutations} The definition of the group $\mathcal{B}_{kn}^k$ can be extended to any set formed by a disjoint union of $k$-tuples as follows. Suppose we have a set $d$ that is a disjoint union of some $k$-tuples
$$d=\bigsqcup_{i=1}^rp_k(a_i),$$
where $a_i$ is a positive integer for any $1\leq i\leq r.$ We define the group $\mathcal{B}_d^k$ to be the following group of permutations:
$$\mathcal{B}_d^k:=\lbrace \omega\in \mathcal{S}_d \mid \forall 1\leq i \leq r, \exists 1\leq j\leq r \text{ with }\omega(p_k(a_i))=p_k(a_j) \rbrace,$$
where $\mathcal{S}_d$ is the group of permutations of the set $d.$ In other words, the group $\mathcal{B}_d^k$ consists of permutations that permute the blocks of the set $d.$

\begin{definition} Let $n$ be a non-negative integer. A $k$-partial permutation of $n$ is a pair $(d,\omega)$ where $d\subset [kn]$ is a disjoint union of some $k$-tuples and $\omega\in \mathcal{B}_d^k.$
\end{definition}

The concept of $k$-partial permutation can be seen as a generalization of the concept of a partial permutation defined by Ivanov and Kerov in \cite{Ivanov1999}. In fact when $k=1,$ a $1$-partial permutation is a partial permutation as defined in \cite{Ivanov1999}. We will denote by $\mathcal{P}_{kn}^k$ the set of all $k$-partial permutations of $n.$ It is clear that the cardinality of the set $\mathcal{P}_{kn}^k$ is 
$$|\mathcal{P}_{kn}^k|=\sum_{r=0}^n {n \choose r}(k!)^rr!=\sum_{r=0}^n (n\downharpoonright r)(k!)^r,$$
where $(n\downharpoonright r):=n(n-1)\cdots (n-r+1)$ is the falling factorial.
\begin{definition}\label{def_supp_ext}
If $(d,\omega)$ is a $k$-partial permutation of $n,$ we define:
\begin{enumerate}
\item[1.] $\supp(\omega)$ to be the support of $\omega.$ That is the minimal union of $k$-tuples of $d$ on which $\omega$ does not act like the identity.
\item[2.] $\underline{\omega}_n$ to be the permutation of the set $[kn]$ obtained from $\omega$ by natural extension (extension by identity).
\end{enumerate}
\end{definition}

\begin{ex} Consider the $3$-partial permutation $(d,\omega),$ where $d=p_3(1)\cup p_3(2)\cup p_3(4)\cup p_3(6)$ and
$$\omega=
\begin{pmatrix}
1&2&3&\mid&4&5&6&\mid&10&11&12&\mid&16&17&18\\
12&10&11&\mid&4&5&6&\mid&16&18&17&\mid&1&2&3
\end{pmatrix}.
$$
We have $\supp(\omega)=p_3(1)\cup p_3(4)\cup p_3(6)$ and 
$$\underline{\omega}_6=
\begin{pmatrix}
1&2&3&\mid&4&5&6&\mid&7&8&9&\mid&10&11&12&\mid&13&14&15&\mid&16&17&18\\
12&10&11&\mid&4&5&6&\mid&7&8&9&\mid&16&18&17&\mid&13&14&15&\mid&1&2&3
\end{pmatrix}.
$$
\end{ex}

The notion of $\ty$ defined for the permutations of $\mathcal{B}_{kn}^k$ can be extended to the $k$-partial permutations of $n.$ If $(d,\omega)$ is a $k$-partial permutation of $n,$ we define its type $\lambda=(\lambda(\rho))_{\rho\vdash k}$ to be the type of its permutation $\omega.$ For example the cycle decomposition of the $3$-partial permutation given in the above example is
$$(1,12,17,2,10,16)(3,11,18)(4)(5)(6)$$
and its type is formed by $\omega(2,1)=(3)$ and $\omega(1^3)=(1).$

\subsection{Action of $\mathcal{B}_{kn}^k$ on the set $\mathcal{P}_{kn}^k$} There is a natural product of $k$-partial permutations of $n$ given in the following definition.

\begin{definition}
If $(d_1,\omega_1)$ and $(d_2,\omega_2)$ are two $k$-partial permutations of $n,$ we define their product as follows:
\begin{equation*}
(d_1,\omega_1)(d_2,\omega_2)=(d_1\cup d_2,\omega_1\omega_2),
\end{equation*}
where the composition $\omega_1\omega_2$ is made after extending both $\omega_1$ and $\omega_2$ by identity to $d_1\cup d_2.$
\end{definition}

It is clear that the set $\mathcal{P}_{kn}^k$ equipped with the above product of $k$-partial permutations is a semi-group. That is the product is associative with identity element the $k$-partial permutation $(\emptyset, 1_\emptyset)$ where $1_\emptyset$ is the trivial permutation of the empty set. The group $\mathcal{B}_{kn}^k$ acts on the semi-group $\mathcal{P}_{kn}^k$ by the following action:
\begin{equation*}
\sigma . (d,\omega)=(\sigma(d),\sigma \omega \sigma^{-1}),
\end{equation*}
for any $\sigma\in \mathcal{B}_{kn}^k$ and $(d,\omega)\in \mathcal{P}_{kn}^k.$ We will use the term \emph{conjugacy class} to denote an orbit of this action and we will say that two elements of $\mathcal{P}_{kn}^k$ are conjugate if they belong to the same orbit. Two $k$-partial permutations $(d_1,\omega_1)$ and $(d_2,\omega_2)$ of $n$  are in the same conjugacy class if and only if there exists a permutation $\sigma\in \mathcal{B}_{kn}^k$ such that $(d_2,\omega_2)=(\sigma(d_1),\sigma \omega_1 \sigma^{-1}).$ That is $|d_1|=|d_2|$ and $\ty(\omega_1)=\ty(\omega_2).$ Thus, we have the following proposition.

\begin{prop}
The conjugacy classes of the action of the group $\mathcal{B}_{kn}^k$ on the set $\mathcal{P}_{kn}^k$ of $k$-partial permutations of $n$ can be indexed by families $\Lambda=(\Lambda(\lambda))_{\lambda\vdash k}$ with $|\Lambda|\leq n$ and for such a family, its associated conjugacy class is:
$$C_{\Lambda;n}:=\lbrace (d,\omega)\in \mathcal{P}_{kn}^k \text{ such that } |d|=k|\Lambda| \text{ and } \ty(\omega)=\Lambda\rbrace.$$
\end{prop}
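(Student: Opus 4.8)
The plan is to show two things: first, that the orbits of the $\mathcal{B}_{kn}^k$-action are in bijection with families $\Lambda$ satisfying $|\Lambda|\leq n$, and second, that the orbit corresponding to $\Lambda$ is exactly the set $C_{\Lambda;n}$ described in the statement. The essential content is that two $k$-partial permutations $(d_1,\omega_1)$ and $(d_2,\omega_2)$ lie in the same orbit if and only if $|d_1|=|d_2|$ and $\ty(\omega_1)=\ty(\omega_2)$. The discussion immediately preceding the statement already asserts this equivalence, so the proof is essentially a verification that the action is well-defined and that these two invariants are complete.

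First I would check that $|d|$ and $\ty(\omega)$ are genuine invariants of the action. If $(d_2,\omega_2)=(\sigma(d_1),\sigma\omega_1\sigma^{-1})$ for some $\sigma\in\mathcal{B}_{kn}^k$, then since $\sigma$ permutes the $k$-tuples, $\sigma(d_1)$ is again a disjoint union of $k$-tuples with $|\sigma(d_1)|=|d_1|$, so $|d_2|=|d_1|$. For the type, conjugation by $\sigma$ sends a cycle of $\omega_1$ to a cycle of the same length, and because $\sigma$ sends $k$-tuples to $k$-tuples, the combinatorial data recorded by $\omega(\rho)$ for each $\rho\vdash k$ is preserved; hence $\ty(\omega_2)=\ty(\omega_1)$. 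This shows each orbit has a well-defined pair $(|d|,\ty(\omega))$, equivalently a well-defined family $\Lambda$ with $|\Lambda|=|d|/k\leq n$.

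The main step is the converse: if $|d_1|=|d_2|$ and $\ty(\omega_1)=\ty(\omega_2)$, the two $k$-partial permutations are conjugate. I would construct an explicit $\sigma\in\mathcal{B}_{kn}^k$. Since $|d_1|=|d_2|$, the underlying sets involve the same number $|\Lambda|$ of $k$-tuples, and since $\ty(\omega_1)=\ty(\omega_2)$, the permutations $\underline{\omega_1}_n$ and $\underline{\omega_2}_n$ of $[kn]$ lie in the same $\mathcal{B}_{kn}^k$-conjugacy class by the classification of conjugacy classes of $\mathcal{B}_{kn}^k$ recalled in Section \ref{sec_2}. I would produce $\sigma$ realizing this conjugacy on the extended permutations, matching the cycle structure $k$-tuple by $k$-tuple so that $\sigma$ carries the cycles of $\omega_1$ on $d_1$ to those of $\omega_2$ on $d_2$ and simultaneously sends $d_1$ to $d_2$; the remaining $k$-tuples outside the supports can be matched arbitrarily by a block permutation. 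Care is needed to ensure $\sigma$ is defined coherently as a single element of $\mathcal{B}_{kn}^k$ and that conjugation does not disturb the part of $d_i$ on which $\omega_i$ acts trivially — this bookkeeping, aligning fixed $k$-tuples of $d_1$ with fixed $k$-tuples of $d_2$, is the part most likely to require attention.

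The hard part will therefore be the explicit construction of the conjugating element $\sigma$, and in particular verifying that it simultaneously realizes the conjugacy of the extended permutations \emph{and} maps $d_1$ onto $d_2$ as sets. Once this is settled, the family $\Lambda$ indexes the orbits bijectively, its associated orbit is precisely $C_{\Lambda;n}$, and the constraint $|\Lambda|\leq n$ follows from $|d|\leq kn$; this completes the proposition.
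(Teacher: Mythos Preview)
Your proposal is correct and follows exactly the line the paper takes: the paper's entire argument is the sentence immediately preceding the proposition, which asserts that $(d_1,\omega_1)$ and $(d_2,\omega_2)$ are conjugate if and only if $|d_1|=|d_2|$ and $\ty(\omega_1)=\ty(\omega_2)$, and then states the proposition as an immediate consequence. Your outline simply fills in the details the paper leaves implicit, in particular the construction of the conjugating $\sigma$ for the converse direction, which the paper does not spell out.
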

We recall that a partition is called proper if it does not have any part equal to $1.$
\begin{definition} A family of partitions $\Lambda$ is called proper if the partition $\Lambda(1^k)$ is proper.
\end{definition}

If $\Lambda$ is a proper family of partitions with $|\Lambda|\leq n,$ we define $\underline{\Lambda}_n$ to be the family of partitions $\Lambda$ except that $\Lambda(1^k)$ is replaced by $\Lambda(1^k)\cup (1^{n-|\Lambda|}).$ It is clear that $|\underline{\Lambda}_n|=n.$

Consider now the following surjective homomorphism $\psi$ that extends $k$-partial permutations of $n$ to elements of $\mathcal{B}_{kn}^k:$

$$\begin{array}{ccccc}
\psi & : &\mathcal{P}^k_{kn} & \to & \mathcal{B}_{kn}^k \\
& & (d,\omega) & \mapsto &  \underline{\omega}_n,\\
\end{array}$$
where $\underline{\omega}_n$ is defined in Definition \ref{def_supp_ext}. Let $\Lambda$ be a family of partitions with $|\Lambda|\leq n$ and fix a permutation $x\in C_{\underline{\Lambda}_n},$ where $C_{\underline{\Lambda}_n}$ is the conjugacy class in $\mathcal{B}_{kn}^k$ associated to the family of partitions $\underline{\Lambda}_n.$ The inverse image of $x$ by $\psi$ is formed by all the $k$-partial permutations $(d,\omega)$ of $n$ that satisfy the following two conditions: 
$$d\supset \supp(x) \text{ and  $\omega$ coincide with $x$ on $d.$} $$

Since $\supp(\omega)$ consists of $|\Lambda|-m_1(\Lambda(1^k))$ $k$-tuples, there are
$${n-|\Lambda|+m_1(\Lambda(1^k))\choose m_1(\Lambda(1^k))}$$ 
elements in $\psi^{-1}(x).$ All of these elements are in $C_{\Lambda;n}$ and we recover all the elements of $C_{\Lambda;n}$ when $x$ runs through all the elements of $C_{\underline{\Lambda}_n}.$ Thus we get the following proposition.

\begin{prop}\label{prop_rel_conj}
If $\Lambda$ is a family of partitions with $|\Lambda|<n$ then:
$$|C_{\Lambda;n}|={n-|\Lambda|+m_1(\Lambda(1^k))\choose m_1(\Lambda(1^k))}|C_{\underline{\Lambda}_n}|.$$
\end{prop}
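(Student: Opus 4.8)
The plan is to directly exploit the fiber structure of the map $\psi$ that was set up in the paragraph immediately preceding the statement. The key observation, already recorded in the excerpt, is that for a fixed $x \in C_{\underline{\Lambda}_n}$, the fiber $\psi^{-1}(x)$ consists of exactly those $k$-partial permutations $(d,\omega)$ with $d \supset \supp(x)$ and $\omega$ agreeing with $x$ on $d$. Since every such $(d,\omega)$ lies in $C_{\Lambda;n}$, and since distinct $x$ give rise to disjoint fibers (because $x = \underline{\omega}_n$ is recovered from $(d,\omega)$), the set $C_{\Lambda;n}$ decomposes as the disjoint union $\bigsqcup_{x \in C_{\underline{\Lambda}_n}} \psi^{-1}(x)$. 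Hence $|C_{\Lambda;n}| = \sum_{x} |\psi^{-1}(x)|$, and the whole statement reduces to showing that $|\psi^{-1}(x)|$ is the claimed binomial coefficient independently of the chosen $x$.

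Next I would count a single fiber. Given $x$, the support $\supp(x)$ is a forced part of every $d$ in the fiber; the freedom lies entirely in which of the remaining $k$-tuples we adjoin to $d$, on which $\omega$ must act as the identity. By the first displayed result recalled from \cite{Tout2018}, the underlying block permutation $p_x$ has cycle-type $\bigcup_{\rho \vdash k} x(\rho) = \ct(p_x)$, and the number of blocks genuinely moved by $x$ equals $|\underline{\Lambda}_n| - m_1(\underline{\Lambda}_n(1^k)) = |\Lambda| - m_1(\Lambda(1^k))$, since completing $\Lambda$ to $\underline{\Lambda}_n$ only adds parts equal to $1$ to the $(1^k)$-component. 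The total number of blocks available is $n$, so the number of blocks on which $x$ acts trivially is $n - |\Lambda| + m_1(\Lambda(1^k))$. A $k$-partial permutation in the fiber is then specified by choosing any subset of these trivially-acted blocks to include in $d$ beyond $\supp(x)$; but we must include exactly enough to realize the prescribed number $m_1(\Lambda(1^k))$ of fixed blocks recorded in the type $\Lambda(1^k)$. Choosing which $m_1(\Lambda(1^k))$ of the $n - |\Lambda| + m_1(\Lambda(1^k))$ trivial blocks to place in the support of $d$ gives precisely $\binom{n-|\Lambda|+m_1(\Lambda(1^k))}{m_1(\Lambda(1^k))}$ choices.

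Putting the two counts together yields
$$
|C_{\Lambda;n}| = \sum_{x \in C_{\underline{\Lambda}_n}} \binom{n-|\Lambda|+m_1(\Lambda(1^k))}{m_1(\Lambda(1^k))} = \binom{n-|\Lambda|+m_1(\Lambda(1^k))}{m_1(\Lambda(1^k))} \, |C_{\underline{\Lambda}_n}|,
$$
which is the asserted identity. The step I expect to require the most care is the bookkeeping in the fiber count: one must argue cleanly that the constraint ``$\ty(\omega) = \Lambda$'' on a member $(d,\omega)$ of the fiber translates exactly into selecting $m_1(\Lambda(1^k))$ extra fixed blocks, and that no other component $\Lambda(\lambda)$ for $\lambda \neq (1^k)$ imposes additional freedom or constraint. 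This hinges on the fact that only the $(1^k)$-component of the type can absorb fixed (identity-acted) blocks — every other $\rho \vdash k$ corresponds to blocks on which $\omega$ acts nontrivially and is therefore already pinned down by $\supp(x)$. Once that identification is justified, the remainder is a routine disjoint-union argument.
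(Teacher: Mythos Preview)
Your proposal is correct and follows essentially the same approach as the paper: both arguments use the fiber structure of the extension map $\psi$, observe that $\supp(x)$ occupies $|\Lambda|-m_1(\Lambda(1^k))$ of the $n$ available $k$-tuples, and count the $\binom{n-|\Lambda|+m_1(\Lambda(1^k))}{m_1(\Lambda(1^k))}$ ways to adjoin identity blocks so that the resulting $k$-partial permutation has type exactly $\Lambda$. The only minor point is that your early claim ``every such $(d,\omega)$ lies in $C_{\Lambda;n}$'' overstates things (the full fiber $\psi^{-1}(x)$ contains partial permutations of all compatible support sizes), but you correctly restrict to the right subset when you do the actual count, so the argument goes through.
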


The action of the group $\mathcal{B}_{kn}^k$ on the set $\mathcal{P}_{kn}^k$ of $k$-partial permutations of $n$ can be extended linearly to an action of $\mathcal{B}_{kn}^k$ on the algebra $\mathbb{C}[\mathcal{P}_{kn}^k].$ The homomorphism $\psi$ can also be extended by linearity to become a surjective homomorphism between the algebras $\mathbb{C}[\mathcal{B}_{kn}^k]$ and $\mathbb{C}[\mathcal{P}_{kn}^k].$ For any $\sigma\in \mathcal{B}_{kn}^k$ and $(d,\omega)\in \mathcal{P}_{kn}^k$ we have:

$$\psi(\sigma.(d,\omega))=\psi(\sigma(d),\sigma\omega\sigma^{-1})=\underline{\sigma\omega\sigma^{-1}}_n=\sigma\underline{\omega}_n\sigma^{-1}=\sigma.\underline{\omega}_n=\sigma.\psi(d,\omega).$$

Let $\mathcal{I}_{kn}^k$ be the sub-algebra of $\mathbb{C}[\mathcal{P}_{kn}^k]$ generated by formal sums of the conjugacy classes $C_{\Lambda;n},$ then we have $\psi(\mathcal{I}_{kn}^k)=Z(\mathbb{C}[\mathcal{B}_{kn}^k]),$ where $Z(\mathbb{C}[\mathcal{B}_{kn}^k])$ is the center of the group algebra $\mathbb{C}[\mathcal{B}_{kn}^k]$ and by Proposition \ref{prop_rel_conj},
$$\psi(\mathbf{C}_{\Lambda;n})={n-|\Lambda|+m_1(\Lambda(1^k))\choose m_1(\Lambda(1^k))}\mathbf{C}_{\underline{\Lambda}_n},$$
for any family of partitions $\Lambda$ with $|\Lambda|<n.$

\subsection{The algebra $\mathcal{I}_\infty^k$}
Let $\mathbb{C}[\mathcal{P}_\infty^k]$ denote the algebra generated by all the $k$-partial permutations with a finite support. Any element $a\in \mathbb{C}[\mathcal{P}_\infty^k]$ can be canonically written as follows:
\begin{equation}\label{can_form}
a=\sum_{r=0}^\infty \sum_{d}\sum_{\omega \in \mathcal{B}_d^k}a_{d,\omega}(d,\omega),
\end{equation}
where the second sum runs through all the set $d$ that are unions of $r$ $k$-tuples and $a_{d,\omega}\in \mathbb{C}$ for any $(d,\omega).$ Denote by $\Proj_n$ the natural projection homomorphism of $\mathbb{C}[\mathcal{P}_\infty^k]$ on $\mathbb{C}[\mathcal{P}_{kn}^k],$ that is if $a\in \mathbb{C}[\mathcal{P}_\infty^k]$ is canonically written as in (\ref{can_form}), then 
$$\Proj_n(a)=\sum_{r=0}^n \sum_{d}\sum_{\omega \in \mathcal{B}_d^k}a_{d,\omega}(d,\omega),$$
where the second sum is now taken over all the sets $d\subset [kn]$ that are unions of $r$ $k$-tuples.  The pair $(\mathbb{C}[\mathcal{P}_\infty^k], \Proj_n)$ is the projective limit of the family $(\mathbb{C}[\mathcal{P}_{kn}^k])_{n\geq 1}$ equipped with the morphisms $\Proj_{nm}:\mathbb{C}[\mathcal{P}_{kn}^k]\rightarrow \mathbb{C}[\mathcal{P}_{km}^k]$ defined on the basis elements of $\mathbb{C}[\mathcal{P}_{kn}^k]$ by 
$$\Proj_{nm}(d,\omega):=\left\{
\begin{array}{ll}
      (d,\omega) & \text{ if } d\subseteq [km] \\
      0 & \text{ otherwise } \\
\end{array} 
\right. 
$$
whenever $m\leq n.$\\

Let $\mathcal{B}_\infty^k$ denote the infinite group of permutations permuting $k$-tuples. That means that any $x\in \mathcal{B}_\infty^k$ is a permutation that permutes only finitely many $k$-tuples, i.e. it has a finite support. The action of $\mathcal{B}_{kn}^k$ on $\mathbb{C}[\mathcal{P}_{kn}^k]$ can be generalized to an action of $\mathcal{B}_\infty^k$ on the algebra $\mathbb{C}[\mathcal{P}_\infty^k].$ In concordance with our notations, let us denote $\mathcal{I}_\infty^k$ the sub-algebra of all finite linear combinations of the conjugacy classes of this action. In other words, $\mathcal{I}_\infty^k$ is generated by the elements $\mathbf{C}_\Lambda,$ indexed by families of partitions, and defined by

$$\mathbf{C}_\Lambda=\sum_{(d,\omega)}(d,\omega),$$
where the sum runs over all $k$-partial permutations $(d,\omega)\in \mathcal{P}_{\infty}^k$ such that $d$ is a union of $|\Lambda|$ $k$-tuples and $\omega$ has type $\Lambda.$ It would be clear that $\Proj_n(\mathbf{C}_\lambda)=0$ if $|\Lambda|>n$ and if $|\Lambda|\leq n,$

$$\Proj_n(\mathbf{C}_\Lambda)=\mathbf{C}_{\Lambda;n}.$$

\section{Structure coefficients of the center of $\mathcal{B}_{kn}^{k}$ algebra}\label{sec:str_coef}

In this section we present our main result in Theorem \ref{Main_Theorem}, a polynomiality property in $n$ for the structure coefficients of the center of the group $\mathcal{B}_{kn}^{k}$ algebra. To show it we describe the structure coefficients of the algebra $\mathcal{I}_\infty^k$ then we apply the composition of the morphisms $\psi\circ \Proj_n$ defined in the above sections.

Let $\Lambda$ and $\Delta$ be two proper families of partitions with $|\Lambda|,|\Delta|\leq n.$ In the algebra $\mathcal{I}_\infty^k,$ we can write the product $\mathbf{C}_\Lambda\mathbf{C}_\Delta$ as a linear combination of the basis elements, that is 
\begin{equation}\label{str_coe_I_infty}
\mathbf{C}_\Lambda\mathbf{C}_\Delta=\sum_{\Gamma}c_{\Lambda\Delta}^\Gamma\mathbf{C}_\Gamma,
\end{equation}
where $\Gamma$ runs through some families of partitions and $c_{\Lambda\Delta}^\Gamma$ are non-negative integers independent of $n.$ If we apply $\Proj_n$ to this equality we get the following identity in $\mathcal{I}_{kn}^k:$
$$\mathbf{C}_{\Lambda;n}\mathbf{C}_{\Delta;n}=\sum_{\Gamma}c_{\Lambda\Delta}^\Gamma\mathbf{C}_{\Gamma;n}.$$

Since $\Lambda$ and $\Delta$ are proper, by applying $\psi$ to this equality we obtain using Proposition \ref{prop_rel_conj} the following identity in the center of the group $\mathcal{B}_{kn}^k$ algebra:

$$\mathbf{C}_{\underline{\Lambda}_n}\mathbf{C}_{\underline{\Delta}_n}=\sum_{\Gamma}c_{\Lambda\Delta}^\Gamma{n-|\Gamma|+m_1(\Gamma(1^k))\choose m_1(\Gamma(1^k))}\mathbf{C}_{\underline{\Gamma}_n}.$$

The sum over all the families of partitions in the above equation can be turned into a sum over all the proper families of partitions if we sum up all the partitions that give $\mathbf{C}_{\underline{\Gamma}_n}.$ Explicitly, we have

$$\mathbf{C}_{\underline{\Lambda}_n}\mathbf{C}_{\underline{\Delta}_n}=\sum_{\Gamma}\Big(\sum_{r=1}^{n-|\Gamma|}c_{\Lambda\Delta}^{\underline{\Gamma}_{|\Gamma|+r}}{n-|\Gamma|\choose r}\Big)\mathbf{C}_{\underline{\Gamma}_n},$$
where the sum now runs over all proper families of partitions. The sums over $\Gamma$ in the above equations are finite. That means there is a finite number of family partitions $\Gamma$ appearing in each equation. To see this, one needs to understand what is the form of the families of partitions $\Gamma$ that may appear in Equation (\ref{str_coe_I_infty}).   

For fixed three families of (not necessarily proper) partitions $\Lambda,$ $\Delta$ and $\Gamma,$ the coefficient $c_{\Lambda\Delta}^{\Gamma}$ in Equation (\ref{str_coe_I_infty}) counts the number of pairs of $k$-partial permutations $\big( (d_1,\omega_1),(d_2,\omega_2)\big)\in C_\Lambda\times C_\Delta$ such that $$(d_1,\omega_1).(d_2,\omega_2)=(d,\omega)$$ where $(d,\omega)\in C_\Gamma$ is a fixed $k$-partial permutation from a specific conjugacy class. We should remark that when multiplying $(d_1,\omega_1)$ by $(d_2,\omega_2),$ the permutation $\omega_1\omega_2$ acts on at most $k|\Lambda|+k|\Delta|$ elements. This means that each family of partitions $\Gamma$ that appears in the sum of Equation (\ref{str_coe_I_infty}) must verify the following condition:
\begin{equation}\label{majoration}
\max(|\Lambda|,|\Delta|)\leq |\Gamma|\leq |\Lambda|+|\Delta|.
\end{equation} 
In other words, we have showed that the function $\deg:\mathcal{I}_\infty^k \rightarrow \mathbb{N}$ defined on the basis elements of $\mathcal{I}_\infty^k$ by $\deg(\mathbf{C}_\Lambda)=|\Lambda|$ is a filtration on $\mathcal{I}_\infty^k.$ Another filtration of the algebra $\mathcal{I}_\infty^k$ will be given in Proposition \ref{prop_filt}. We are able now to state the main theorem of this paper.

\begin{theoreme}\label{Main_Theorem}
Let $\Lambda, \Delta$ and $\Gamma$ be three proper families of partitions satisfying $\max(|\Lambda|,|\Delta|)\leq |\Gamma|\leq |\Lambda|+|\Delta|.$ For any integer $n\geq |\Gamma|$ we have 
$$c_{\underline{\Lambda}_n\underline{\Delta}_n}^{\underline{\Gamma}_n}=\sum_{r=1}^{n-|\Gamma|}c_{\Lambda\Delta}^{\underline{\Gamma}_{|\Gamma|+r}}{n-|\Gamma|\choose r},$$
where $c_{\Lambda\Delta}^{\underline{\Gamma}_{|\Gamma|+r}}$ are non-negative integers independent of $n.$
\end{theoreme}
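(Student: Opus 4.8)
The plan is to deduce Theorem~\ref{Main_Theorem} by reading off the coefficient of a single basis element from the expansion of $\mathbf{C}_{\underline{\Lambda}_n}\mathbf{C}_{\underline{\Delta}_n}$ assembled in the discussion preceding the statement. The whole argument rests on one structural fact, which I would isolate and prove first: the product $\mathbf{C}_\Lambda\mathbf{C}_\Delta=\sum_\Gamma c_{\Lambda\Delta}^\Gamma\mathbf{C}_\Gamma$ takes place inside $\mathcal{I}_\infty^k$, an algebra defined with no reference to $n$, so each $c_{\Lambda\Delta}^\Gamma$ is a non-negative integer \emph{independent of $n$}. Concretely, I would observe that $c_{\Lambda\Delta}^\Gamma$ counts the pairs $\big((d_1,\omega_1),(d_2,\omega_2)\big)\in C_\Lambda\times C_\Delta$ whose product equals one fixed $(d,\omega)\in C_\Gamma$; by $\mathcal{B}_\infty^k$-invariance this count is the same for every representative, hence well defined, and since $k$-partial permutations live in $\mathcal{P}_\infty^k$ it never sees $n$. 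The product of the two (infinite, formal) class sums is again $\mathcal{B}_\infty^k$-invariant, and the degree bound \eqref{majoration}, $\max(|\Lambda|,|\Delta|)\le|\Gamma|\le|\Lambda|+|\Delta|$, forces only finitely many $\Gamma$ to occur, so $\mathbf{C}_\Lambda\mathbf{C}_\Delta$ is a genuine finite combination of class sums.

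Next I would push this identity down to the center of $\mathbb{C}[\mathcal{B}_{kn}^k]$ by applying $\Proj_n$ and then $\psi$, as in the text. Applying $\Proj_n$ sends $\mathbf{C}_\Gamma$ to $\mathbf{C}_{\Gamma;n}$ and annihilates the terms with $|\Gamma|>n$; applying $\psi$ and invoking Proposition~\ref{prop_rel_conj} replaces each $\mathbf{C}_{\Gamma;n}$ by $\binom{n-|\Gamma|+m_1(\Gamma(1^k))}{m_1(\Gamma(1^k))}\mathbf{C}_{\underline{\Gamma}_n}$. Since $\Lambda$ and $\Delta$ are proper, $m_1(\Lambda(1^k))=m_1(\Delta(1^k))=0$, so these multiplicities are $1$ on the left and, $\psi$ being a homomorphism, the left-hand side is exactly $\mathbf{C}_{\underline{\Lambda}_n}\mathbf{C}_{\underline{\Delta}_n}$. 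The essential bookkeeping step is then to regroup the sum over all families $\Gamma$ according to their proper version: every family whose proper version is a fixed proper $\Gamma$ arises as $\underline{\Gamma}_{|\Gamma|+r}$ by adjoining $r$ trivial $k$-tuples, and for it one has $m_1=r$, size $|\Gamma|+r$, and completion back to $\underline{\Gamma}_n$. The multiplicity then simplifies to $\binom{n-|\Gamma|}{r}$, which gathers the contributions of all such families onto the single basis element $\mathbf{C}_{\underline{\Gamma}_n}$.

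Finally I would compare coefficients. The class sums $\mathbf{C}_{\underline{\Gamma}_n}$, as $\underline{\Gamma}_n$ ranges over the families of partitions of $n$ (equivalently, over proper $\Gamma$ with $|\Gamma|\le n$), form a linear basis of $Z(\mathbb{C}[\mathcal{B}_{kn}^k])$ and are therefore linearly independent. By the very definition of structure coefficients, the coefficient of $\mathbf{C}_{\underline{\Gamma}_n}$ in $\mathbf{C}_{\underline{\Lambda}_n}\mathbf{C}_{\underline{\Delta}_n}$ is $c_{\underline{\Lambda}_n\underline{\Delta}_n}^{\underline{\Gamma}_n}$. Equating it with the regrouped coefficient yields the stated formula, each summand being the product of the $n$-independent integer $c_{\Lambda\Delta}^{\underline{\Gamma}_{|\Gamma|+r}}$ with the binomial $\binom{n-|\Gamma|}{r}$, which is a polynomial in $n$; this is precisely the polynomiality phenomenon the paper is after.

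The step I expect to be the main obstacle is the first one, not the final comparison. Rigorously establishing that $\mathcal{I}_\infty^k$ is closed under multiplication with $n$-independent structure constants demands checking that multiplying two $\mathcal{B}_\infty^k$-invariant formal sums yields a finite linear combination of class sums, which is exactly where the degree filtration \eqref{majoration} earns its keep. The second delicate point is the combinatorial accounting in the passage through $\psi$: the multiplicities counted by Proposition~\ref{prop_rel_conj} record the number of ways of completing a $k$-partial permutation to a full permutation of $[kn]$, and one must verify that, after reindexing families by their proper version, these multiplicities collapse cleanly to $\binom{n-|\Gamma|}{r}$. Once this accounting is in place, the theorem follows at once from linear independence of the conjugacy-class sums.
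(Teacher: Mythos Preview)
Your proposal is correct and follows essentially the same approach as the paper: the paper's proof is precisely the discussion preceding the theorem, which establishes the $n$-independent expansion in $\mathcal{I}_\infty^k$, applies $\Proj_n$ and then $\psi$ (using Proposition~\ref{prop_rel_conj} and properness of $\Lambda,\Delta$), regroups the resulting sum over proper families, and reads off coefficients. Your exposition is somewhat more explicit about why the $c_{\Lambda\Delta}^\Gamma$ are $n$-independent and about the linear-independence step at the end, but the strategy and all key ingredients coincide.
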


By Equation (\ref{majoration}), the integers $r$ in the above theorem satisfy the following inequality which leads us to the next corollary 
$$r\leq |\Lambda|+|\Delta|-|\Gamma|.$$

\begin{cor}\label{main_cor}
Let $\Lambda, \Delta$ and $\Gamma$ be three proper families of partitions satisfying $\max(|\Lambda|,|\Delta|)\leq |\Gamma|\leq |\Lambda|+|\Delta|.$ The structure coefficient $c_{\underline{\Lambda}_n\underline{\Delta}_n}^{\underline{\Gamma}_n}$ is a polynomial in $n$ with non-negative integer coefficients and of degree at most $|\Lambda|+|\Delta|-|\Gamma|.$  
\end{cor}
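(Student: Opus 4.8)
The plan is to obtain the stated equality by transporting the product expansion of $\mathbf{C}_\Lambda\mathbf{C}_\Delta$ in the universal algebra $\mathcal{I}_\infty^k$ down to the center $Z(\mathbb{C}[\mathcal{B}_{kn}^k])$ along the composite homomorphism $\psi\circ\Proj_n$, and then to read off the coefficient of a single basis element. First I would record the facts that make this work and that are already available: the coefficients $c_{\Lambda\Delta}^{\Gamma'}$ in the expansion (\ref{str_coe_I_infty}) are non-negative integers that do not depend on $n$, since they count the pairs $\big((d_1,\omega_1),(d_2,\omega_2)\big)\in C_\Lambda\times C_\Delta$ whose product equals one fixed $k$-partial permutation of type $\Gamma'$, a count carried out entirely in $\mathcal{P}_\infty^k$; the degree filtration $\deg(\mathbf{C}_\Lambda)=|\Lambda|$ together with (\ref{majoration}) forces only finitely many $\Gamma'$ to occur, all of them with $\max(|\Lambda|,|\Delta|)\le|\Gamma'|\le|\Lambda|+|\Delta|$; and $\Proj_n$ and $\psi$ are algebra homomorphisms.

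Then I would apply these morphisms to (\ref{str_coe_I_infty}). Applying $\Proj_n$ yields $\mathbf{C}_{\Lambda;n}\mathbf{C}_{\Delta;n}=\sum_{\Gamma'}c_{\Lambda\Delta}^{\Gamma'}\mathbf{C}_{\Gamma';n}$ in $\mathcal{I}_{kn}^k$, and applying $\psi$ transports the identity into $Z(\mathbb{C}[\mathcal{B}_{kn}^k])$ through the rule $\psi(\mathbf{C}_{\Xi;n})=\binom{n-|\Xi|+m_1(\Xi(1^k))}{m_1(\Xi(1^k))}\mathbf{C}_{\underline{\Xi}_n}$ of Proposition \ref{prop_rel_conj}. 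Because $\Lambda$ and $\Delta$ are proper, $m_1(\Lambda(1^k))=m_1(\Delta(1^k))=0$, the binomial factors coming from the left-hand side equal $1$, and the left-hand side is exactly $\mathbf{C}_{\underline{\Lambda}_n}\mathbf{C}_{\underline{\Delta}_n}$.

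The combinatorial heart of the argument is the reorganisation of the right-hand side by proper parts. Writing, for each index $\Gamma'$, the proper family $\Gamma$ obtained by deleting the parts equal to $1$ in $\Gamma'(1^k)$, one has $\Gamma'=\underline{\Gamma}_{|\Gamma|+r}$ with $r=m_1(\Gamma'(1^k))$; a direct check gives $\underline{\Gamma'}_n=\underline{\Gamma}_n$ and simplifies the binomial factor to $\binom{n-|\Gamma'|+r}{r}=\binom{n-|\Gamma|}{r}$, which no longer depends on how $\Gamma'$ sits above $\Gamma$ except through $r$. Grouping all indices $\Gamma'$ lying over a common proper family $\Gamma$ then produces exactly the identity displayed immediately before the statement, namely $\mathbf{C}_{\underline{\Lambda}_n}\mathbf{C}_{\underline{\Delta}_n}=\sum_{\Gamma}\big(\sum_{r}c_{\Lambda\Delta}^{\underline{\Gamma}_{|\Gamma|+r}}\binom{n-|\Gamma|}{r}\big)\mathbf{C}_{\underline{\Gamma}_n}$, the outer sum running over proper families. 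Since the family $(\mathbf{C}_{\underline{\Gamma}_n})_{\Gamma}$, as $\Gamma$ runs over proper families with $|\Gamma|\le n$, is precisely the basis of $Z(\mathbb{C}[\mathcal{B}_{kn}^k])$ indexed by conjugacy classes, its elements are linearly independent; comparing the coefficient of $\mathbf{C}_{\underline{\Gamma}_n}$ on the two sides gives the asserted formula, and (\ref{majoration}) bounds the summation range by $r\le|\Lambda|+|\Delta|-|\Gamma|$.

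The one step that requires genuine care, rather than invocation of earlier results, is this last reindexing: I must confirm that associating to $\Gamma'$ its proper part sets up a bijection between the non-proper indices appearing on the right and the completions $\underline{\Gamma}_{|\Gamma|+r}$, that completion to size $n$ is insensitive to the added $1$'s (so that genuinely distinct $\Gamma'$ contribute to the same basis element $\mathbf{C}_{\underline{\Gamma}_n}$), and that the binomial factor collapses uniformly to $\binom{n-|\Gamma|}{r}$ with $r$ independent of $n$. Once this bookkeeping is settled, the integrality and (in the corollary) polynomiality assertions are immediate, since each $c_{\Lambda\Delta}^{\underline{\Gamma}_{|\Gamma|+r}}$ is a fixed non-negative integer and $\binom{n-|\Gamma|}{r}$ is a polynomial in $n$ of degree $r$.
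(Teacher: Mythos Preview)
Your proposal is correct and follows essentially the same approach as the paper: the paper derives Theorem~\ref{Main_Theorem} by applying $\psi\circ\Proj_n$ to the expansion~(\ref{str_coe_I_infty}), using Proposition~\ref{prop_rel_conj} and the properness of $\Lambda,\Delta$ to kill the binomial factors on the left, and then regrouping the right-hand side over proper families exactly as you describe; the corollary is then deduced in one line from the bound $r\le|\Lambda|+|\Delta|-|\Gamma|$ coming from~(\ref{majoration}). Your write-up simply unfolds this argument in full, including the reindexing $\Gamma'=\underline{\Gamma}_{|\Gamma|+r}$ and the simplification $\binom{n-|\Gamma'|+r}{r}=\binom{n-|\Gamma|}{r}$, which the paper leaves implicit.
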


The result in Corollary \ref{main_cor}, was first given in \cite{Tout2018} as an application of the general framework for the structure coefficients of the centers of finite groups algebra built in \cite{Tout2017}.\\

Filtrations allow us to have more information about the families of partitions $\Gamma$ that appear in the expression of the product $\mathbf{C}_\Lambda\mathbf{C}_\Delta$ in the algebra $\mathcal{I}_\infty^k.$ The first filtration on $\mathcal{I}_\infty^k$ was given by $\deg.$ In the next proposition we give another one.

\begin{prop}\label{prop_filt}
The function $\deg_1:\mathcal{I}_\infty^k \rightarrow \mathbb{N}$ defined on the basis elements of $\mathcal{I}_\infty^k$ by $$\deg_1(\mathbf{C}_\Lambda)=|\Lambda|+m_1(\Lambda(1^k))$$ is a filtration.
\end{prop}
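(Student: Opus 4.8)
The plan is to verify directly the defining property of a filtration: writing $\mathcal{I}^{\le m}$ for the linear span of those basis elements $\mathbf{C}_\Lambda$ with $\deg_1(\mathbf{C}_\Lambda)\le m$, it suffices to check that $\mathcal{I}^{\le m}\cdot \mathcal{I}^{\le m'}\subseteq \mathcal{I}^{\le m+m'}$, and this amounts to showing that whenever $c_{\Lambda\Delta}^{\Gamma}\neq 0$ in $\mathcal{I}_\infty^k$ one has $\deg_1(\mathbf{C}_\Gamma)\le \deg_1(\mathbf{C}_\Lambda)+\deg_1(\mathbf{C}_\Delta)$, exactly as was done for $\deg$ through Equation (\ref{majoration}). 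The first step is to re-express $\deg_1$ via supports. If $\Lambda=\ty(\omega)$ for a $k$-partial permutation $(d,\omega)$ and $s_\Lambda$ denotes the number of $k$-tuples contained in $\supp(\omega)$, then $m_1(\Lambda(1^k))$ is precisely the number of $k$-tuples of $d$ fixed pointwise by $\omega$, that is $m_1(\Lambda(1^k))=|\Lambda|-s_\Lambda$; hence $\deg_1(\mathbf{C}_\Lambda)=2|\Lambda|-s_\Lambda$.

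Next I would fix a witnessing pair $\big((d_1,\omega_1),(d_2,\omega_2)\big)\in C_\Lambda\times C_\Delta$ whose product $(d_1\cup d_2,\omega_1\omega_2)$ lies in $C_\Gamma$; such a pair exists precisely when $c_{\Lambda\Delta}^{\Gamma}\neq 0$. Setting $c:=|d_1\cap d_2|/k$ for the number of common $k$-tuples, inclusion--exclusion gives $|\Gamma|=|\Lambda|+|\Delta|-c$. Combining this with the formula $\deg_1=2|\cdot|-s$, the inequality $\deg_1(\mathbf{C}_\Gamma)\le \deg_1(\mathbf{C}_\Lambda)+\deg_1(\mathbf{C}_\Delta)$ becomes equivalent to the single combinatorial estimate $s_\Gamma\ge s_\Lambda+s_\Delta-2c$, where $s_\Gamma$ counts the $k$-tuples in $\supp(\omega_1\omega_2)$.

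The core of the proof, and the step I expect to be the main obstacle, is this support estimate. Extend $\omega_1,\omega_2$ by identity to $d:=d_1\cup d_2$ and regard $S_i:=\supp(\omega_i)$ as a set of $k$-tuples of $d$; let $\sigma_i$ be the permutation of the index set of $d$ induced by $\omega_i$ on blocks. I would prove $S_1\,\triangle\,S_2\subseteq \supp(\omega_1\omega_2)$. For $p_k(i)\in S_1\setminus S_2$ the map $\omega_2$ fixes $p_k(i)$ pointwise, so $\omega_1\omega_2$ agrees with $\omega_1$ there and is non-trivial. For $p_k(i)\in S_2\setminus S_1$ the delicate case occurs: since $\omega_1$ fixes $p_k(i)$ pointwise we have $\sigma_1(i)=i$; if $\omega_1\omega_2$ moved $p_k(i)$ to a different block we are done, and otherwise $\sigma_1\sigma_2(i)=i$, so $\sigma_1(\sigma_2(i))=i=\sigma_1(i)$ and injectivity of $\sigma_1$ forces $\sigma_2(i)=i$, whence $\omega_1\omega_2$ agrees with $\omega_2$ on $p_k(i)$ and is again non-trivial. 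Thus every tuple of $S_1\,\triangle\,S_2$ lies in $\supp(\omega_1\omega_2)$.

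Finally I would assemble the pieces. Since $|S_1\,\triangle\,S_2|=s_\Lambda+s_\Delta-2|S_1\cap S_2|$ and $S_1\cap S_2\subseteq d_1\cap d_2$ forces $|S_1\cap S_2|\le c$, the inclusion just proved yields $s_\Gamma\ge |S_1\,\triangle\,S_2|\ge s_\Lambda+s_\Delta-2c$. Feeding this back through $\deg_1=2|\cdot|-s$ and $|\Gamma|=|\Lambda|+|\Delta|-c$ gives $\deg_1(\mathbf{C}_\Gamma)\le \deg_1(\mathbf{C}_\Lambda)+\deg_1(\mathbf{C}_\Delta)$, completing the verification that $\deg_1$ is a filtration. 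The only genuinely non-routine ingredient is the injectivity argument handling a block that is moved by one factor but fixed pointwise by the other; the remaining manipulations are the same bookkeeping used for the filtration $\deg$.
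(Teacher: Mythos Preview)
Your argument is correct. The paper proves the same inequality but organizes the bookkeeping differently: it keeps $\deg_1$ in the form $|\Lambda|+m_1(\Lambda(1^k))$, partitions the block-index set of $d_1\cup d_2$ into $A\setminus B$, $B\setminus A$, $A\cap B$, introduces auxiliary sets $H,K,G,L,T,F$ recording where $\omega_1,\omega_2$ act as the identity or as mutual inverses, and then uses the crude bound $|T|+|F|\le |A\cap B|+|K|+|L|$ to assemble the filtration inequality. Your reformulation $\deg_1(\mathbf{C}_\Lambda)=2|\Lambda|-s_\Lambda$ and the containment $S_1\triangle S_2\subseteq \supp(\omega_1\omega_2)$ is a dual version of the same count: rather than bounding the fixed tuples from above you bound the support from below, and the block-permutation injectivity step you isolate for tuples in $S_2\setminus S_1$ is exactly what forces the paper's set $F$ to lie inside $S_1\cap S_2$. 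The underlying combinatorics is identical; your packaging avoids the six auxiliary sets and the loose inequality, at the cost of the short extra argument with $\sigma_1,\sigma_2$.
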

\begin{proof}
Let $(d_1,\omega_1)$ and $(d_2,\omega_2)$ be two $k$-partial permutations and suppose that:
$$d_1=\bigcup_{i=1}^rp_k(a_i) \text{ and } d_2=\bigcup_{j=1}^s p_k(b_j)$$
for some integers $a_i$ and $b_j.$ Denote by $A,$ $B,$ $H,$ $K,$ $G,$ $L,$ $T$ and $F$ the following sets:
$$A=\lbrace a_i \text{ such that } 1\leq i\leq r\rbrace, ~~ B=\lbrace b_j \text{ such that } 1\leq j\leq s\rbrace,$$
$$H=\lbrace a_i\in A\setminus B \text{ such that $\omega_1$ acts as identity on $p_k(a_i)$}\rbrace,$$
$$K=\lbrace a_i\in A\cap B \text{ such that $\omega_1$ acts as identity on $p_k(a_i)$}\rbrace,$$ 
$$G=\lbrace b_j\in B\setminus A \text{ such that $\omega_2$ acts as identity on $p_k(b_j)$}\rbrace,$$
$$L=\lbrace b_j\in A\cap B \text{ such that $\omega_2$ acts as identity on $p_k(b_j)$}\rbrace,$$
$$T=\lbrace x\in A\cap B \text{ such that $\omega_1$ and $\omega_2$ act as identities on $p_k(x)$}\rbrace$$
and 
$$F=\lbrace y\in (A\cap B)\setminus T \text{ such that $\omega_1$ and $\omega_2$ act as inverses on $p_k(y)$}\rbrace.$$
It would be easy to see that $|T|+|F|\leq |A\cap B|+|K|+|L|$ and from it obtain the following: 
\begin{eqnarray*}
\deg_1(d_1\cup d_2,\omega_1\omega_2)&=&|A\setminus B|+|B\setminus A|+|A\cap B|+|H|+|G|+|T|+|F|\\
&\leq & \underbrace{|A\setminus B|+|A\cap B|+|H|+|K|}_{\deg_1(d_1,\omega_1)}+\underbrace{|B\setminus A|+|A\cap B|+|G|+|L|}_{\deg_1(d_2,\omega_2)}.
\end{eqnarray*}
The result follows.
\end{proof} 

In \cite{Ivanov1999}, the authors propose many filtrations that may be generalized to our case. We turn now to make some computations. In the following three examples, we give explicit expressions of products of conjugacy classes for the cases $k=1,$ $k=2$ and $k=3$ respectively. 

\begin{ex} \label{ex_k=1} Let $k=1.$ Since $(1)$ is the only partition of one, the algebra $\mathcal{I}_\infty^1$ is generated by partitions. For instance, $C_{(2)}$ is the set of all $1$-partial permutations that have type $(2)$ or equivalently the set of all partial permutations with cycle-type $(2).$ In $\mathcal{I}_\infty^1$ we have for example
$$\mathbf{C}_{(2)}\mathbf{C}_{(2)}=\mathbf{C}_{(1^2)}+3\mathbf{C}_{(3)}+2\mathbf{C}_{(2^2)}$$
and 
$$\mathbf{C}_{(2)}\mathbf{C}_{(3)}=2\mathbf{C}_{(1,2)}+4\mathbf{C}_{(4)}+\mathbf{C}_{(2,3)}.$$
The first equation appears in \cite{Ivanov1999} and the second appears in \cite{touPhd14}. Apply now $\psi\circ \Proj_n$ on the above two expressions to get the following results in the center of the symmetric group algebra $Z(\mathbf{C}[\mathcal{S}_n]]):$
$$\mathbf{C}_{(1^{n-2},2)}\mathbf{C}_{(1^{n-2},2)}=\frac{n(n-1)}{2} \mathbf{C}_{(1^n)}+3\mathbf{C}_{(1^{n-3},3)}+2\mathbf{C}_{(1^{n-4},2^2)} \text{ for any $n\geq 4$},$$
and 
$$\mathbf{C}_{(1^{n-2},2)}\mathbf{C}_{(1^{n-3},3)}=2(n-2) \mathbf{C}_{(1^{n-2},2)}+4\mathbf{C}_{(1^{n-4},4)}+\mathbf{C}_{(1^{n-5},2,3)} \text{ for any $n\geq 5$}.$$
\end{ex}

\begin{ex} \label{ex_k=2} There are only two partitions of $2,$ namely $(1^2)$ and $(2).$ Thus the elements generating $\mathcal{I}_\infty^2$ are indexed by families of partitions $\Lambda=(\Lambda(1^2),\Lambda(2)).$ Take for instance $\Lambda=((1),(2)),$ then $C_{((1),(2))}$ is the set of all $2$-partial permutations with type $((1),(2)).$ For example, $(\lbrace 3,4,7,8,9,10\rbrace, (3,7,4,8)(9)(10))\in C_{((1),(2))}.$ We have the following two complete expressions in $\mathcal{I}_\infty^2:$
\begin{equation}
\mathbf{C}_{((1),\emptyset)}\mathbf{C}_{((1),(1))}=2\mathbf{C}_{((1),(1))}+2\mathbf{C}_{((1^2),(1))}
\end{equation}
and 
\begin{equation}
\mathbf{C}_{(\emptyset,(2))}\mathbf{C}_{(\emptyset,(2))}=2\mathbf{C}_{((1^2),\emptyset)}+2\mathbf{C}_{(\emptyset,(1^2))}+2\mathbf{C}_{(\emptyset,(2^2))}+3\mathbf{C}_{((3),\emptyset)}.
\end{equation} 
For example the first coefficient $2$ in the above first equation is due to the fact that there are only two pairs $(x,y)\in C_{((1),\emptyset)}\times C_{((1),(1))}$ that satisfy $xy=(\lbrace 1,2,3,4\rbrace;(1)(2)(3,4)).$ Namely $(x,y)$ can be one and only one of the following pairs:

$$\Big( \big(\lbrace 1,2\rbrace;(1)(2)\big),\big(\lbrace 1,2,3,4\rbrace;(1)(2)(3,4)\big)\Big)\text{ or }\Big( \big(\lbrace 3,4\rbrace;(3)(4)\big),\big(\lbrace 1,2,3,4\rbrace;(1)(2)(3,4)\big)\Big).$$

Apply now $\psi\circ \Proj_n$ for the above two expressions to get the following results in the center of the hyperoctahedral group algebra $Z(\mathbf{C}[\mathcal{B}_{2n}^2]):$
$$\mathbf{C}_{((1^n),\emptyset)}\mathbf{C}_{((1^{n-1}),(1))}=\mathbf{C}_{((1^{n-1}),(1))} \text{ for any $n\geq 3$}$$
and for any $n\geq 5$
$$\mathbf{C}_{((1^{n-2}),(2))}\mathbf{C}_{((1^{n-2}),(2))}=n(n-1) \mathbf{C}_{((1^{n}),\emptyset)}+2\mathbf{C}_{((1^{n-2}),(1^2))}+2\mathbf{C}_{((1^{n-4}),(2^2))}+3\mathbf{C}_{((1^{n-3},3),\emptyset)}.$$
The first equation comes with no surprise since $C_{((1^n),\emptyset)}$ is the identity class.
\end{ex}

\begin{ex} $(1^3),$ $(2,1)$ and $(3)$ are the only three partitions of $3.$ Thus $\mathcal{I}_\infty^3$ is generated by elements indexed by triplets of partitions. We will suppose that the first corresponds to the partition $(1^3),$ the second corresponds to $(2,1)$ and the third corresponds to $(3).$ For instance $\mathbf{C}_{(\emptyset,(1),(1))}$ contains the $3$-partial permutation $\big( \lbrace  4,5,6,13,14,15\rbrace;(4,5)(6)(13,15,14)\big).$ In $\mathcal{I}_\infty^3$ we have the following two complete products:

$$\mathbf{C}_{(\emptyset,(1),(1))}\mathbf{C}_{(\emptyset,\emptyset,(1))}=2\mathbf{C}_{(\emptyset,(1),(1^2))}+2\mathbf{C}_{((1),(1),\emptyset)}+3\mathbf{C}_{(\emptyset,(1),(1))}$$
and
$$\mathbf{C}_{(\emptyset,(1),(1))}\mathbf{C}_{(\emptyset,(1),\emptyset)}=2\mathbf{C}_{(\emptyset,(1^2),(1))}+3\mathbf{C}_{((1),\emptyset,(1))}+4\mathbf{C}_{(\emptyset,(1^2),\emptyset)}+6\mathbf{C}_{(\emptyset,\emptyset,(1^2))}.$$
For example the coefficient $3$ in the above first equation comes from the fact that:
\begin{eqnarray*}
(1,2)(3)(7,8,9)&=&(1,2)(3)(7,9,8).(7,9,8)\\
&=&(1,3)(2)(7,8,9).(1,2,3)\\
&=&(2,3)(1)(7,8,9).(1,3,2).
\end{eqnarray*}

If we apply now $\psi\circ \Proj_n$ on the above equations, we get the following explicit expressions in the center of the group $\mathcal{B}_{3n}^3.$ For $n\geq 2:$
$$\mathbf{C}_{((1^{n-2}),(1),(1))}\mathbf{C}_{((1^{n-1}),\emptyset,(1))}=2\mathbf{C}_{((1^{n-3}),(1),(1^2))}+2(n-1)\mathbf{C}_{((1^{n-1}),(1),\emptyset)}+3\mathbf{C}_{((1^{n-2}),(1),(1))}$$
and for $n\geq 3:$
\begin{eqnarray*}
\mathbf{C}_{((1^{n-2}),(1),(1))}\mathbf{C}_{((1^{n-1}),(1),\emptyset)}&=&2\mathbf{C}_{((1^{n-3}),(1^2),(1))}+3(n-1)\mathbf{C}_{((1^{n-1}),\emptyset,(1))}\\
&&+4\mathbf{C}_{((1^{n-2}),(1^2),\emptyset)}+6\mathbf{C}_{((1^{n-2}),\emptyset,(1^2))}.
\end{eqnarray*}

\end{ex}

\section{Irreducible characters of $\mathcal{B}_{kn}^k$ and symmetric functions}\label{sec:iso}

In \cite{Ivanov1999}, Ivanov and Kerov showed that the algebra $\mathcal{I}_\infty^1$ is isomorphic to the algebra of shifted symmetric functions which we shall denote $\mathcal{A}^{1*}.$ The goal of this section is to prove that in general for any fixed positive integer $k,$ the algebra $\mathcal{I}_\infty^k$ is isomorphic to an algebra of shifted symmetric functions on many alphabets denoted by $\mathcal{A}^{k*}.$ We start first by recalling the result of Ivanov and Kerov in \cite{Ivanov1999}. Then we show a similar result in the case $k=2$ (hyperoctahedral group) before considering the general case.

\subsection{Case $k=1$: the symmetric group $\mathcal{S}_n$} The irreducible $\mathcal{S}_n$-modules are indexed by partitions of $n.$ For $\lambda\vdash n,$ we will denote by $V^\lambda$ its associated irreducible $\mathcal{S}_n$-module and by $\chi^\lambda$ its character. The algebra $\mathcal{A}^1$ of symmetric functions has many basis families indexed by partitions. Among them are the power sum functions $(p_\lambda)_{\lambda}$ and the Schur functions $(s_\lambda)_{\lambda}.$ The transition matrix between these two basis is given by the following formula of Frobenius:
\begin{equation} \label{Frob_form}
p_\delta=\sum_{\rho \atop{|\rho|=|\delta|}}\chi^{\rho}_{\delta}s_\rho,
\end{equation}
where $\chi^{\rho}_{\delta}$ denotes the value of the character $\chi^{\rho}$ on any permutation of cycle-type $\delta.$

A shifted symmetric function $f$ in infinitely many variables $(x_1,x_2,\ldots)$ is a family $(f_i)_{i>1}$ that satisfies the following two properties: 
\begin{enumerate}
\item[1.] $f_i$ is a symmetric polynomial in $(x_1 -1,x_2 -2 ,\ldots ,x_i-i).$ 
\item[2.] $f_{i+1}(x_1,x_2,\ldots ,x_i,0) = f_i(x_1,x_2,\ldots ,x_i).$ 
\end{enumerate} 
The set of all shifted symmetric functions is an algebra which we shall denote $\mathcal{A}^{1*}.$ It has many basis families indexed by partitions. In \cite{okounkov1997shifted}, Okounkov and Olshanski gave a linear isomorphism $\varphi:\mathcal{A}^{1}\rightarrow \mathcal{A}^{1*}.$ For any partition $\lambda,$ the images of the power sum function $p_\lambda$ and the Schur function $s_\lambda$ by $\varphi$ are the shifted power symmetric function $p^{\#}_\lambda$ and the shifted Schur function $s^*_\lambda.$ By applying $\varphi$ to the Frobenius relation given in Equation (\ref{Frob_form}), we get:

\begin{equation}\label{shif_Frob_form}
p^{\#}_\delta=\sum_{\rho \atop{|\rho|=|\delta|}}\chi^{\rho}_{\delta}s^*_\rho.
\end{equation}

If $f\in \mathcal{A}^{1*}$ and if $\lambda=(\lambda_1,\lambda_2,\cdots ,\lambda_l)$ is a partition, we denote by $f(\lambda)$ the value $f_l(\lambda_1,\lambda_2,\cdots ,\lambda_l).$ By \cite{okounkov1997shifted}, any shifted symmetric function is determined by its values on partitions. The vanishing characterization of the shifted symmetric functions given in \cite{okounkov1997shifted} states that $s_\rho^*$ is the unique shifted symmetric function of degree at most $|\rho|$ such that 
\begin{equation}\label{characterisation}
s^{*}_\rho(\lambda)=\left\{
\begin{array}{ll}
      \frac{(|\lambda|\downharpoonright |\rho|)}{\dim \lambda} f^{\lambda/\rho}& \text{ if } \rho\subseteq \lambda \\
      0 & \text{ otherwise } \\
\end{array} 
\right. 
\end{equation}
where $(|\lambda|\downharpoonright |\rho|):=|\lambda|(|\lambda|-1)\cdots (|\lambda|-|\rho|+1)$ is the falling factorial and $f^{\lambda/\rho}$ is the number of skew standard tableau of shape $\lambda/\rho.$ Using the following branching rule for characters of the symmetric groups
\begin{equation}\label{branching_rule}
\chi^{\lambda}_{\rho\cup (1^{|\lambda|-|\rho|})}=\sum_{\nu; |\nu|=|\rho|}f^{\lambda/\nu}\chi^\nu_\rho,
\end{equation}
one can verify using formulas (\ref{shif_Frob_form}) and (\ref{characterisation}) that
$$p^{\#}_\delta(\lambda)=\left\{
\begin{array}{ll}
      \frac{(|\lambda|\downharpoonright |\delta|)}{\dim \lambda} \chi^{\lambda}_{\underline{\delta}_{|\lambda|}}& \text{ if } |\lambda|\geq |\delta| \\
      0 & \text{ otherwise } \\
\end{array} 
\right. 
$$

If $k=1,$ the algebra $\mathcal{I}_\infty^1$ constructed in the previous section has a basis $\mathbf{C}_\lambda$ indexed by partitions. Now let $\lambda$ be any partition and consider the composition $F^1_\lambda:=\frac{\chi^\lambda}{\dim \lambda}\circ \psi \circ \Proj_{|\lambda|}$ of morphisms, where $\dim \lambda$ denotes the dimension of the irreducible $\mathcal{S}_n$-module $V^\lambda.$ If $\delta$ is a partition such that $|\delta|>|\lambda|,$ then $F^1_\lambda(\mathbf{C}_\delta)=0$ since the projection is zero in this case. Suppose now that $|\lambda|\geq |\delta|,$ we have the following equalities:
\begin{eqnarray}
\left(\frac{\chi^\lambda}{\dim \lambda}\circ \psi \circ \Proj_{|\lambda|} \right)(\mathbf{C}_{\delta})&=& \frac{\chi^\lambda}{\dim \lambda} \left( {|\lambda|-|\delta|+m_1(\delta)\choose m_1(\delta)} \mathbf{C}_{\underline{\delta}_{|\lambda|}} \right) \\
\notag
&=& {|\lambda|-|\delta|+m_1(\delta)\choose m_1(\delta)}\frac{|\lambda|!}{ z_{\underline{\delta}_{|\lambda|}}\dim \lambda}\chi^{\lambda}_{\underline{\delta}_{|\lambda|}} \\
\notag
&=& \frac{1}{z_\delta}\frac{(|\lambda|\downharpoonright |\delta|)}{\dim \lambda} \chi^{\lambda}_{\underline{\delta}_{|\lambda|}}.
\end{eqnarray}

This implies that $F^1_\lambda(\mathbf{C}_\delta)=z_\delta^{-1}p^{\#}_\delta(\lambda)$ and that the map $F^1:\mathcal{I}_\infty^1\rightarrow \mathcal{A}^{1*}$ defined on the basis elements of $\mathcal{I}_\infty^1$ by $$F^1(\mathbf{C}_\delta)=z_\delta^{-1}p^{\#}_\delta$$ is an isomorphism of algebras. This result was first shown by Ivanov and Kerov in \cite[Theorem 9.1]{Ivanov1999}.  It can be used to obtain the multiplication table of $p^{\#}_\delta$ in $\mathcal{A}^{1*}$ from that of $\mathbf{C}_\delta$ in $\mathcal{I}_\infty^1.$ For example, the two equations given in Example \ref{ex_k=1} implies the following two equalities in $\mathcal{A}^{1*}$
$$p^{\#}_{(2)}p^{\#}_{(2)}=p^{\#}_{(1^2)}+4p^{\#}_{(3)}+2p^{\#}_{(2^2)}$$
and 
$$p^{\#}_{(2)}p^{\#}_{(3)}=6p^{\#}_{(1,2)}+6p^{\#}_{(4)}+p^{\#}_{(2,3)}.$$
\subsection{Case $k=2$: the hyperoctahedral group $\mathcal{B}_{2n}^{2}$} 
The conjugacy classes as well as the irreducible representations of the hyperoctahedral group $\mathcal{B}_{2n}^2$ are indexed by bipartitions of $n.$ These are pairs of partitions $(\lambda_1,\lambda_2)$ satisfying $|\lambda_1|+|\lambda_2|=n,$ where $\lambda_1$ (resp. $\lambda_2$) corresponds to the partition $(1^2)$ (resp. $(2)$) of $2.$ To simplify our notations, $(\lambda_1,\lambda_2)\vdash n$ will be used to say that $(\lambda_1,\lambda_2)$ is a bipartition of $n.$ If $(\lambda_1,\lambda_2)\vdash n$ then the size of the conjugacy class and the dimension of the irreducible representation associated to $(\lambda_1,\lambda_2)$ are given respectively by the following formulas:
\begin{equation}
|C_{(\lambda_1,\lambda_2)}|=\frac{2^nn!}{z_{(\lambda_1,\lambda_2)}}=\frac{2^nn!}{2^{l(\lambda_1)}z_{\lambda_1}2^{l(\lambda_2)}z_{\lambda_2}}
\end{equation}
and 
\begin{equation}
\dim (\lambda_1,\lambda_2)=n! \frac{\dim \lambda_1}{(|\lambda_1|)!}\frac{\dim \lambda_2}{(|\lambda_2|)!}.
\end{equation}

If $\alpha=(\alpha_1,\alpha_2,\cdots, \alpha_r)$ is a partition of $n,$ define 
$$p_{\alpha}^{+}(x,y):=p_{\alpha_1}^{+}p_{\alpha_1}^{+}\cdots p_{\alpha_r}^{+} ~~~~ \text{ and } ~~~~ p_{\alpha}^{-}(x,y):=p_{\alpha_1}^{-}p_{\alpha_1}^{-}\cdots p_{\alpha_r}^{-}$$
where for any $k\geq 1$
$$p_k^+=\sum_{i\geq 1}(x_i^k+y_i^k) ~~~~ \text{ and } ~~~~ p_k^-=\sum_{i\geq 1}(x_i^k-y_i^k).$$

If $(\delta_1,\delta_2)$ is a bipartition we define the following two functions on two alphabets:
\begin{equation*}
p_{(\delta_1,\delta_2)}(x,y):=p_{\delta_1}^{+}(x,y)p_{\delta_2}^{-}(x,y) ~~~~ \text{ and } ~~~~ s_{(\rho_1,\rho_2)}(x,y):=s_{\rho_1}(x)s_{\rho_2}(y).
\end{equation*}
These are the generalizations of the power sum symmetric function and the Schur function to two alphabets and they form linear basis for the algebra $\mathcal{A}^{2}$ of symmetric functions on two alphabets. If $(\delta_1,\delta_2)\vdash n,$ the following formula in $\mathcal{A}^{2}$ is analogous to Formula (\ref{Frob_form}) of Frobenius, see \cite[page 178]{McDo} and \cite{adin2017character}:

\begin{equation*}
p_{(\delta_1,\delta_2)}(x,y)=\sum_{(\rho_1,\rho_2)\vdash n}  \chi^{(\rho_1,\rho_2)}_{(\delta_1,\delta_2)}s_{(\rho_1,\rho_2)}(x,y),
\end{equation*}
where $\chi^{(\rho_1,\rho_2)}_{(\delta_1,\delta_2)}$ denotes the value of the irreducible character $\chi^{(\rho_1,\rho_2)}$ of $\mathcal{B}_{2n}^2$ evaluated on a permutation with $2$-set type $(\delta_1,\delta_2).$ In the same way as we did in the previous section, we define the algebra $\mathcal{A}^{2*}$ of shifted functions in two alphabets and we have:
\begin{equation}\label{Frob_hyp}
p^{\#}_{(\delta_1,\delta_2)}(x,y)=\sum_{(\rho_1,\rho_2)\vdash n}  \chi^{(\rho_1,\rho_2)}_{(\delta_1,\delta_2)}s^*_{(\rho_1,\rho_2)}(x,y).
\end{equation}


\begin{ex}\label{ex_char_hyp} If $\delta_1=\delta_2=(1)$ we have the following equation:
$$\big(\sum_{i\geqslant 1}x_i+\sum_{i\geqslant 1}y_i\big)\big(\sum_{i\geqslant 1}x_i-\sum_{i\geqslant 1}y_i\big)=\chi_{((1),(1))}^{((2),\emptyset)}s_{(2)}({\bf{x}})+\chi_{((1),(1))}^{((1),(1))}s_{(1)}({\bf{x}})s_{(1)}({\bf{y}})$$
$$+\chi_{((1),(1))}^{(\emptyset,(2))}s_{(2)}({\bf{y}})+\chi_{((1),(1))}^{((1^2),\emptyset)}s_{(1^2)}({\bf{x}})+\chi_{((1),(1))}^{(\emptyset,(1^2))}s_{(1^2)}({\bf{y}}),$$
where $$s_{(1)}({\bf{x}})=\sum_{i\geqslant 1} x_i,$$
$$s_{(1^2)}({\bf{x}})=\sum_{1\leq x_i<x_j} x_ix_j$$ 
and $$s_{(2)}({\bf{x}})=\sum_{i\geqslant 1} x_i^2+\sum_{1\leqslant i_1<i_2}x_{i_1}x_{i_2}.$$
By identifying both sides we deduce that $\chi_{((1),(1))}^{((2),\emptyset)}=1,$ $\chi_{((1),(1))}^{((1),(1))}=0,$ $\chi_{((1),(1))}^{(\emptyset,(2))}=-1,$ $\chi_{((1),(1))}^{((1^2),\emptyset)}=1$ and $\chi_{((1),(1))}^{(\emptyset,(1^2))}=-1.$
\end{ex}
The characters $\chi^{(\rho_1,\rho_2)}_{(\delta_1,\delta_2)}$ can be expressed in terms of the characters of the symmetric group as following, see \cite[Formula (4.2)]{adin2017character}:

\begin{equation}\label{char_hyper}
\chi^{(\rho_1,\rho_2)}_{(\delta_1,\delta_2)}=\sum_{u,v}(-1)^{\mid \lbrace j: v_j=- \rbrace\mid} \chi^{\rho_1}_{\alpha_{uv}}\chi^{\rho_2}_{\beta_{uv}}
\end{equation}
where the sum ranges over all vectors 
$$u=(u_1,u_2,\cdots ,u_{l(\delta_1)})\in \lbrace -,+\rbrace^{l(\delta_1)} ~~~~ \text{ and } ~~~~ v=(v_1,v_2,\cdots ,v_{l(\delta_2)})\in \lbrace -,+\rbrace^{l(\delta_2)},$$
and where $\alpha_{uv}$ (resp. $\beta_{uv}$) is the composition consisting of the parts $(\delta_{1})_i$ of $\delta_1$ with $u_i=+$ (resp. $u_i=-$) followed by the parts $(\delta_{2})_j$ of $\delta_2$ with $v_j=+$ (resp. $v_j=-$). As application, we can recover the results of Example \ref{ex_char_hyp}. For instance:
$$\chi_{((1),(1))}^{((2),\emptyset)}=\sum_{(u,v)\in \lbrace(+,+)\rbrace}(-1)^{\mid (u,v): v=- \rbrace\mid} \chi^{(2)}_{\alpha_{uv}}\chi^{\emptyset}_{\beta_{uv}}=\chi^{(2)}_{(1^2)}=1,$$
and
$$\chi_{((1),(1))}^{((1),(1))}=\sum_{(u,v)\in \lbrace(-,+),(+,-)\rbrace}(-1)^{\mid (u,v): v=- \rbrace\mid} \chi^{(1)}_{\alpha_{uv}}\chi^{(1)}_{\beta_{uv}}=\chi^{(1)}_{(1)}\chi^{(1)}_{(1)}-\chi^{(1)}_{(1)}\chi^{(1)}_{(1)}=0.$$

\begin{prop} If $(\rho_1,\rho_2)\vdash n$ and $(\delta_1,\delta_2)\vdash r$ with $r\leq n$ then:
\begin{equation}\label{branching_hyp}
\chi^{(\rho_1,\rho_2)}_{\underline{(\delta_1,\delta_2)}_n}=\sum_{(\nu_1,\nu_2)\vdash r}\chi^{(\nu_1,\nu_2)}_{(\delta_1,\delta_2)}{n-r \choose |\rho_1|-|\nu_1|}f^{\rho_1/\nu_1}f^{\rho_2/\nu_2},
\end{equation}
and 
\begin{equation}
\frac{(n\downharpoonright r)}{\dim (\rho_1,\rho_2)}\chi^{(\rho_1,\rho_2)}_{\underline{(\delta_1,\delta_2)}_n}=p_{(\delta_1,\delta_2)}^{\#}(\rho_1,\rho_2).
\end{equation}
\end{prop}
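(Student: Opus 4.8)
The plan is to reduce both identities to their symmetric-group analogues, namely the branching rule (\ref{branching_rule}) and the vanishing characterization (\ref{characterisation}), by feeding everything through the explicit character formula (\ref{char_hyper}).

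For the branching rule (\ref{branching_hyp}) I would start from its left-hand side and substitute $\underline{(\delta_1,\delta_2)}_n=(\delta_1\cup(1^{n-r}),\delta_2)$ into (\ref{char_hyper}). The sign vector $u$ now ranges over $\lbrace -,+\rbrace^{l(\delta_1)+n-r}$, so I would split it as $u=(u',u'')$, where $u'$ carries the signs of the genuine parts of $\delta_1$ and $u''$ the signs of the $n-r$ adjoined parts equal to $1$. Since the sign $(-1)^{\mid \lbrace j: v_j=- \rbrace\mid}$ depends only on $v$, summing over $u''$ merely distributes the extra $1$'s between the compositions $\alpha$ and $\beta$. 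Writing $\alpha'_{u'v}$ and $\beta'_{u'v}$ for the compositions assembled from $\delta_1$ and $\delta_2$ alone, the factor $\chi^{\rho_1}$ (resp. $\chi^{\rho_2}$) vanishes unless its lower index is a partition of $|\rho_1|$ (resp. $|\rho_2|$); hence exactly $|\rho_1|-|\alpha'_{u'v}|$ of the added $1$'s must land in $\alpha$, and the number of choices is ${n-r \choose |\rho_1|-|\alpha'_{u'v}|}$. The inner sum therefore collapses to the single term
$${n-r \choose |\rho_1|-|\alpha'_{u'v}|}\,\chi^{\rho_1}_{\alpha'_{u'v}\cup(1^{|\rho_1|-|\alpha'_{u'v}|})}\,\chi^{\rho_2}_{\beta'_{u'v}\cup(1^{|\rho_2|-|\beta'_{u'v}|})}.$$
Applying the symmetric-group branching rule (\ref{branching_rule}) to each of the two characters introduces partitions $\nu_1,\nu_2$ with $|\nu_1|=|\alpha'_{u'v}|$ and $|\nu_2|=|\beta'_{u'v}|$ together with the coefficients $f^{\rho_1/\nu_1}$ and $f^{\rho_2/\nu_2}$. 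Interchanging the order of summation, the binomial becomes ${n-r \choose |\rho_1|-|\nu_1|}$, and recognizing by (\ref{char_hyper}) read in reverse that $\sum_{u',v}(-1)^{\mid \lbrace j: v_j=- \rbrace\mid}\chi^{\nu_1}_{\alpha'_{u'v}}\chi^{\nu_2}_{\beta'_{u'v}}=\chi^{(\nu_1,\nu_2)}_{(\delta_1,\delta_2)}$ (with $|\nu_1|+|\nu_2|=r$) reproduces exactly the right-hand side of (\ref{branching_hyp}).

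For the second identity I would apply the shifted Frobenius formula (\ref{Frob_hyp}) to the bipartition $(\delta_1,\delta_2)\vdash r$ (it is an identity of shifted symmetric functions on two alphabets, the sum running over $(\nu_1,\nu_2)\vdash r$) and evaluate both sides at $(\rho_1,\rho_2)$. Using the product structure $s^*_{(\nu_1,\nu_2)}(x,y)=s^*_{\nu_1}(x)\,s^*_{\nu_2}(y)$ and the vanishing characterization (\ref{characterisation}) on each factor, this gives
$$p^{\#}_{(\delta_1,\delta_2)}(\rho_1,\rho_2)=\sum_{(\nu_1,\nu_2)\vdash r}\chi^{(\nu_1,\nu_2)}_{(\delta_1,\delta_2)}\frac{(|\rho_1|\downharpoonright|\nu_1|)(|\rho_2|\downharpoonright|\nu_2|)}{\dim\rho_1\,\dim\rho_2}f^{\rho_1/\nu_1}f^{\rho_2/\nu_2}.$$
It then remains to verify the elementary numerical identity
$$\frac{(|\rho_1|\downharpoonright|\nu_1|)(|\rho_2|\downharpoonright|\nu_2|)}{\dim\rho_1\,\dim\rho_2}=\frac{(n\downharpoonright r)}{\dim(\rho_1,\rho_2)}{n-r \choose |\rho_1|-|\nu_1|},$$
which follows by inserting $\dim(\rho_1,\rho_2)=\frac{n!}{|\rho_1|!\,|\rho_2|!}\dim\rho_1\dim\rho_2$ and simplifying the falling factorials, using $|\rho_1|+|\rho_2|=n$, $|\nu_1|+|\nu_2|=r$ and hence $n-r-(|\rho_1|-|\nu_1|)=|\rho_2|-|\nu_2|$. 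Substituting this back, pulling the $n$-independent factor $\frac{(n\downharpoonright r)}{\dim(\rho_1,\rho_2)}$ out of the sum, and invoking the first identity (\ref{branching_hyp}) to collapse the remaining summation, I obtain $p^{\#}_{(\delta_1,\delta_2)}(\rho_1,\rho_2)=\frac{(n\downharpoonright r)}{\dim(\rho_1,\rho_2)}\chi^{(\rho_1,\rho_2)}_{\underline{(\delta_1,\delta_2)}_n}$.

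The main obstacle is the very first reduction: keeping track, inside the sign-vector formula (\ref{char_hyper}) for the extended bipartition $(\delta_1\cup(1^{n-r}),\delta_2)$, of how the $n-r$ adjoined parts equal to $1$ split between $\alpha$ and $\beta$, and checking that the size constraints imposed by the two symmetric-group characters produce precisely the binomial coefficient ${n-r \choose |\rho_1|-|\nu_1|}$. Once (\ref{branching_hyp}) is in hand, the second formula is just the $k=1$ computation of the preceding subsection carried out on each alphabet and glued together by the dimension identity above.
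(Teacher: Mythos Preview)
Your proof is correct and follows essentially the same approach as the paper. For the first identity the paper simply asserts that it is ``a consequence of Formulas (\ref{branching_rule}) and (\ref{char_hyper})'' without spelling out the sign-vector bookkeeping you carry out; your detailed splitting $u=(u',u'')$ and the observation that the size constraints on $\chi^{\rho_1},\chi^{\rho_2}$ force the binomial coefficient is exactly the computation implicit in that one-line claim. For the second identity the paper runs the same chain of equalities as you do but in the opposite direction: it starts from $\frac{(n\downharpoonright r)}{\dim(\rho_1,\rho_2)}\chi^{(\rho_1,\rho_2)}_{\underline{(\delta_1,\delta_2)}_n}$, inserts (\ref{branching_hyp}), simplifies to $\sum_{(\nu_1,\nu_2)}\chi^{(\nu_1,\nu_2)}_{(\delta_1,\delta_2)}s^*_{\nu_1}(\rho_1)s^*_{\nu_2}(\rho_2)$ via (\ref{characterisation}), and recognizes this as $p^{\#}_{(\delta_1,\delta_2)}(\rho_1,\rho_2)$ by (\ref{Frob_hyp}); your argument is the same computation read backwards.
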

\begin{proof} The first result is a consequence of Formulas (\ref{branching_rule}) and (\ref{char_hyper}). Multiplying Equation (\ref{branching_hyp}) by $\frac{(n\downharpoonright r)}{\dim (\rho_1,\rho_2)}$ yields:

\begin{eqnarray*}
\frac{(n\downharpoonright r)}{\dim (\rho_1,\rho_2)}\chi^{(\rho_1,\rho_2)}_{(\delta_1\cup (1^{n-r}),\delta_2)}&=&\sum_{(\nu_1,\nu_2)\vdash r}\frac{|\rho_1|!|\rho_2|!}{(n-r)!\dim \rho_1 \dim \rho_2}\chi^{(\nu_1,\nu_2)}_{(\delta_1,\delta_2)}{n-r \choose |\rho_1|-|\nu_1|}f^{\rho_1/\nu_1}f^{\rho_2/\nu_2}\\
&=&\sum_{(\nu_1,\nu_2)\vdash r} \chi^{(\nu_1,\nu_2)}_{(\delta_1,\delta_2)}\frac{(|\rho_1|\downharpoonright |\nu_1|)}{\dim \rho_1} f^{\rho_1/\nu_1} \frac{(|\rho_2|\downharpoonright |\nu_2|)}{\dim \rho_2}f^{\rho_2/\nu_2}\\
&=&\sum_{(\nu_1,\nu_2)\vdash r} \chi^{(\nu_1,\nu_2)}_{(\delta_1,\delta_2)}s^*_{\delta_1}(\rho_1)s^*_{\delta_2}(\rho_2)  ~~~~~~~~ (\text{ by characterization (\ref{characterisation}) })   \\
&=&p_{(\delta_1,\delta_2)}^{\#}(\rho_1,\rho_2)  ~~~~~~~~ (\text{ by Equation (\ref{Frob_hyp}) })
\end{eqnarray*}
\end{proof}

If $(\rho_1,\rho_2)\vdash n$ and $(\delta_1,\delta_2)\vdash r,$ applying the composition $F_{(\rho_1,\rho_2)}^2:=\frac{\chi^{(\rho_1,\rho_2)}}{\dim {(\rho_1,\rho_2)}}\circ \psi \circ \Proj_{n}$ of morphisms to the basis element of $\mathcal{I}_\infty^2$ indexed by the bipartition $(\delta_1,\delta_2)$ yields zero if $r>n$ and if $r\leq n$ we have: 
\begin{eqnarray}\label{eq_isom}
F_{(\rho_1,\rho_2)}^2(\mathbf{C}_{(\delta_1,\delta_2)})&=& \frac{\chi^{(\rho_1,\rho_2)}}{\dim {(\rho_1,\rho_2)}} \Big( {n-r+m_1(\delta_1)\choose m_1(\delta_1))} \mathbf{C}_{\underline{(\delta_1,\delta_2)}_{n}} \Big) \\
\notag
&=& {n-r+m_1(\delta_1)\choose m_1(\delta_1)}\frac{n!2^{n}}{ z_{\underline{(\delta_1,\delta_2)}_{n}}\dim (\rho_1,\rho_2)}\chi^{(\rho_1,\rho_2)}_{\underline{(\delta_1,\delta_2)}_{n}} \\
\notag
&=& {n-r+m_1(\delta_1)\choose m_1(\delta_1)}\frac{n!2^{n}}{ z_{(\delta_1,\delta_2)\frac{2^{n-r}(n-r+m_1(\delta_1))!}{m_1(\delta_1)!}}\dim (\rho_1,\rho_2)}\chi^{(\rho_1,\rho_2)}_{\underline{(\delta_1,\delta_2)}_{n}} \\
\notag
&=& \frac{2^{r}}{z_{(\delta_1,\delta_2)}}\frac{(n\downharpoonright r)}{\dim (\rho_1,\rho_2)} \chi^{(\rho_1,\rho_2)}_{\underline{(\delta_1,\delta_2)}_{n}}\\
\notag
&=&\frac{2^{r}}{z_{(\delta_1,\delta_2)}}p_{(\delta_1,\delta_2)}^{\#}(\rho_1,\rho_2).
\end{eqnarray}
This implies the following theorem.

\begin{theoreme}
The linear map $F^2:\mathcal{I}_\infty^2\longrightarrow \mathcal{A}^{2*}$ defined by $$F^2(\mathbf{C}_{(\delta_1,\delta_2)})=\frac{2^{|\delta_1|+|\delta_2|}}{z_{(\delta_1,\delta_2)}}p^{\#}_{(\delta_1,\delta_2)}$$ is an isomorphism of algebras.
\end{theoreme}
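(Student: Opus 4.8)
The plan is to establish that $F^2$ is a linear isomorphism and that it is multiplicative, thereby promoting it to an algebra isomorphism. First I would observe that the hard computational work has effectively been done already: Equation (\ref{eq_isom}) shows that for every bipartition $(\delta_1,\delta_2)$ and every $(\rho_1,\rho_2)\vdash n$ with $n\geq r:=|\delta_1|+|\delta_2|$, the composition $F^2_{(\rho_1,\rho_2)}$ evaluated on $\mathbf{C}_{(\delta_1,\delta_2)}$ equals $\frac{2^{r}}{z_{(\delta_1,\delta_2)}}p^{\#}_{(\delta_1,\delta_2)}(\rho_1,\rho_2)$, and that it vanishes when $r>n$. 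Since the value $p^{\#}_{(\delta_1,\delta_2)}(\rho_1,\rho_2)$ on a bipartition of size $n<r$ is also zero (the falling factorial $(n\downharpoonright r)$ vanishes), the two sides agree as functions of $(\rho_1,\rho_2)$ for all $n$. Because by the Okounkov--Olshanski theory a shifted symmetric function is completely determined by its values on (bi)partitions, this shows that $F^2$ is well-defined, mapping the basis element $\mathbf{C}_{(\delta_1,\delta_2)}$ of $\mathcal{I}_\infty^2$ exactly to the stated scalar multiple of $p^{\#}_{(\delta_1,\delta_2)}$.

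Next I would argue bijectivity. The elements $\mathbf{C}_{(\delta_1,\delta_2)}$, indexed by bipartitions, form a linear basis of $\mathcal{I}_\infty^2$ by construction, and the shifted power sums $p^{\#}_{(\delta_1,\delta_2)}$, obtained by applying the Okounkov--Olshanski isomorphism $\varphi$ to the classical basis $p_{(\delta_1,\delta_2)}$ of $\mathcal{A}^2$, form a linear basis of $\mathcal{A}^{2*}$. Since each nonzero scalar $\frac{2^{|\delta_1|+|\delta_2|}}{z_{(\delta_1,\delta_2)}}$ is a unit, $F^2$ sends a basis to a basis and is therefore a linear isomorphism.

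The main obstacle is multiplicativity: I must verify that $F^2(\mathbf{C}_\Lambda \mathbf{C}_\Delta)=F^2(\mathbf{C}_\Lambda)\,F^2(\mathbf{C}_\Delta)$ in $\mathcal{A}^{2*}$. The natural route is to test both sides against all the evaluation functionals. For a fixed $(\rho_1,\rho_2)\vdash n$, the functional $\frac{\chi^{(\rho_1,\rho_2)}}{\dim(\rho_1,\rho_2)}$ is an algebra homomorphism on $Z(\mathbb{C}[\mathcal{B}_{2n}^2])$ (normalized characters are multiplicative on the center), and $\psi\circ\Proj_n$ is an algebra homomorphism by the discussion preceding the statement; hence $F^2_{(\rho_1,\rho_2)}=\frac{\chi^{(\rho_1,\rho_2)}}{\dim(\rho_1,\rho_2)}\circ\psi\circ\Proj_n$ is multiplicative on $\mathcal{I}_\infty^2$ for each $n$. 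Since a shifted symmetric function is determined by its values on all bipartitions, the multiplicativity of $F^2$ as a map into $\mathcal{A}^{2*}$ follows from the multiplicativity of each $F^2_{(\rho_1,\rho_2)}$ together with the identification in Equation (\ref{eq_isom}) of $F^2_{(\rho_1,\rho_2)}$ with evaluation of $F^2(\,\cdot\,)$ at $(\rho_1,\rho_2)$.

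Finally I would assemble these pieces: well-definedness and the basis-to-basis property give a linear isomorphism, and the per-$n$ multiplicativity of the normalized-character evaluations, combined with the vanishing characterization, upgrades it to an algebra isomorphism. The one point requiring care is the bookkeeping with the binomial and $z$-factors in (\ref{eq_isom}), so I would double-check that the normalization constant $\frac{2^{|\delta_1|+|\delta_2|}}{z_{(\delta_1,\delta_2)}}$ is genuinely independent of $n$, which is precisely what makes $F^2$ descend from the family of maps $(F^2_{(\rho_1,\rho_2)})_n$ to a single map on the projective-limit algebra $\mathcal{I}_\infty^2$.
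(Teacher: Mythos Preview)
Your proposal is correct and follows essentially the same approach as the paper. The paper's argument consists of the computation in Equation~(\ref{eq_isom}) followed by the terse ``This implies the following theorem''; you have simply made explicit the three ingredients the paper leaves implicit, namely that each $F^2_{(\rho_1,\rho_2)}$ is a composition of algebra homomorphisms (hence multiplicative), that shifted symmetric functions are determined by their values on bipartitions, and that $F^2$ sends a basis to a basis.
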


This isomorphism allows us to obtain the multiplication table of $p^{\#}_{(\delta_1,\delta_2)}$ in $\mathcal{A}^{2*}$ from the multiplication table of $\mathbf{C}_{(\delta_1,\delta_2)}$ in $\mathcal{I}_\infty^2.$ For instance, the equations given in Example \ref{ex_k=2} give us:

$$p^{\#}_{((1),\emptyset)}p^{\#}_{((1),(1))}=2p^{\#}_{((1),(1))}+p^{\#}_{((1^2),(1))}$$
and 
$$p^{\#}_{(\emptyset,(2))}p^{\#}_{(\emptyset,(2))}=p^{\#}_{((1^2),\emptyset)}+p^{\#}_{(\emptyset,(1^2))}+p^{\#}_{(\emptyset,(2^2))}+4p^{\#}_{((3),\emptyset)}.$$ 

\subsection{The general case.} We refer to \cite[Appendix B]{McDo} for the results of the representation theory of wreath products presented in this section. Let $(P_r(C_\lambda))_{r\geq 1, \lambda\vdash k}$ be a family of independent indeterminates over $\mathbb{C}.$ For each $\lambda\vdash k,$ we may think of $P_r(C_\lambda)$ as the $r^{th}$ power sum in a sequence of variables $x_\lambda=(x_{i\lambda})_{i\geq 1}.$ Let us denote by $\mathcal{A}^k$ the algebra over $\mathbb{C}$ with algebraic basis the elements $P_r(C_\lambda)$
$$\mathcal{A}^k:=\mathbb{C}[P_r(C_\lambda);r\geq 1, \lambda\vdash k].$$
If $\rho=(\rho_1,\rho_2,\cdots, \rho_l)$ is an arbitrary partition and $\lambda\vdash k,$ we define $P_\rho(C_\lambda)$ to be the product of $P_{\rho_i}(C_\lambda),$
$$P_\rho(C_\lambda):=P_{\rho_{1}}(C_\lambda)P_{\rho_2}(C_\lambda)\cdots P_{\rho_l}(C_\lambda).$$
The family $(P_\Lambda)_\Lambda$ indexed by families of partitions, where
$$P_\Lambda:=\prod_{\lambda\vdash k}P_{\Lambda(\lambda)}(C_\lambda),$$
forms a linear basis for $\mathcal{A}^k.$
If we assign degree $r$ to $P_r(C_\lambda),$ then 
$$\mathcal{A}^k=\bigoplus_{n\geq 0}\mathcal{A}_n^k$$
is a graded $\mathbb{C}$-algebra where $\mathcal{A}_n^k$ is the algebra spanned by all $P_\Lambda$ where $|\Lambda|=n.$
The algebra $\mathcal{A}^k$ can be equipped with a hermitian scalar product defined by 
$$<f,g>=\sum_\Lambda f_\Lambda \bar{g}_\Lambda Z_\Lambda$$
for any two elements $f=\sum_\Lambda f_\Lambda P_\Lambda$ and $g=\sum_\Lambda g_\Lambda P_\Lambda$ of $\mathcal{A}^k.$ In particular, we have:
$$<P_\Lambda,P_\Gamma>=\delta_{\Lambda,\Gamma}Z_\Lambda,$$
where $\delta_{\Lambda,\Gamma}$ is the Kronecker symbol. 

If $\gamma$ is a partition of $k$ and $r\geq 1,$ define 
$$P_r(\chi^\gamma):=\sum_{\lambda\vdash k}z_{\lambda}^{-1}\chi^\gamma_\lambda P_r(C_\lambda),$$
where $\chi^\gamma_\lambda$ is the value of the character $\chi^\gamma$ on one element of the conjugacy class $C_\lambda.$ By the orthogonality of the characters of $\mathcal{S}_k,$
$$<\chi^\gamma,\chi^\delta>:=\frac{1}{k!}\sum_{g\in \mathcal{S}_k}\overline{\chi^\gamma(g)}\chi^\delta(g)=\sum_{\rho\vdash k}z_{\rho}^{-1}\chi^\gamma_\rho\chi^\delta_\rho=\delta_{\gamma,\delta},$$
we can write
$$P_r(C_\lambda)=\sum_{\gamma\vdash k}\overline{\chi^\gamma_\lambda}P_r(\chi^\gamma).$$
We may think of $P_r(\chi^\gamma)$ as the $r^{th}$ power sum in a new sequence of variables $y_\gamma=(y_{i\gamma})_{i\geq 1}$ and denote by $s_\rho(\chi^\gamma)$ the schur function $s_\rho$ associated to the partition $\rho$ on the sequence of variables $(y_{i\gamma})_{i\geq 1}.$ Now, for any family of partitions $\Lambda,$ define $$S_\Lambda:=\prod_{\lambda\vdash k}s_{\Lambda(\lambda)}(\chi^\lambda).$$
The family $(S_\Lambda)_\Lambda$ indexed by the families of partitions is an orthonormal basis of $\mathcal{A}^k,$ see \cite{McDo}. 

Let $\gamma$ be a partition of $k$ and consider $V^\gamma$ the irreducible $\mathcal{S}_k$-module associated to $\gamma.$ The group $\mathcal{B}_{kn}^k$ acts on the $n^{th}$ tensor power $T^n(V^\gamma)=V^\gamma\otimes V^\gamma\cdots \otimes V^\gamma$ as follows:
$$\omega. v_1\otimes v_2\otimes\cdots \otimes v_n:=\omega_1v_{p_\omega^{-1}(1)}\otimes \omega_2v_{p_\omega^{-1}(2)}\otimes \cdots \otimes \omega_nv_{p_\omega^{-1}(n)},$$
where $\omega\in ,$ $v_1,v_2,\cdots ,v_n\in V^\gamma$ and $\omega_i\in \mathcal{S}_k$ is the normalized restriction of $\omega$ on the block $p_\omega^{-1}(i).$ If we denote by $\eta_n(\chi^\gamma)$ the character of this representation of $\mathcal{B}_{kn}^k$ then, by \cite[Equation (8.2), page 176]{McDo}, if $\omega\in \mathcal{B}_{kn}^k$ we have:
$$\eta_n(\chi^\gamma)(\omega)=\prod_{\rho\vdash k}({\chi^\gamma_\rho})^{l(\omega(\rho))}.$$

For any partition $\mu$ of $m$ and for any partition $\gamma$ of $k$ define
$$X^\mu(\chi^\gamma):=\det (\eta_{\mu_i-i+j}(\chi^{\gamma})).$$ 
By extending this definition to families of partitions, we obtain the full list of irreducible characters of $\mathcal{B}_{kn}^k.$ If $\Lambda$ is a family of partitions, define 
$$X^\Lambda:=\prod_{\rho\vdash k}X^{\Lambda(\rho)}(\chi^\rho).$$ 

For any two families of partitions $\Lambda$ and $\Delta,$ let us denote by $X^\Lambda_\Delta$ the value of the character $X^\Lambda$ on any of the element of the conjugacy class $C_\Delta.$ By \cite[page 177]{McDo}, we have the following three important identities

$$X^\Lambda_\Delta=<S_\Lambda,P_\Delta>,$$ 
$$S_\Lambda=\sum_{\Gamma}Z_{\Gamma}^{-1}X^\Lambda_\Gamma P_\Gamma$$ 
and
$$P_\Lambda=\sum_{\Gamma}\overline{X^\Gamma_\Lambda} S_\Gamma,$$
where the sums run over families of partitions.

Let $\Lambda$ be a family of partitions and consider the composition $F_\Lambda^k:=\frac{X^\Lambda}{\dim \Lambda}\circ \psi \circ \Proj_{|\Lambda|}$ of morphisms. We would like to see how $F_\Lambda^k$ acts on the basis elements of $\mathcal{I}_\infty^k.$ If $\Delta$ is a family of partitions such that $|\Delta|>|\Lambda|,$ it would be clear then that $F_\Lambda^k(C_\Delta)=0.$ Suppose now that $|\Lambda|\geq |\Delta|,$ we have the following equalities:
\begin{eqnarray}\label{eq_isom}
\big(\frac{X^\Lambda}{\dim \Lambda}\circ \psi \circ \Proj_{|\Lambda|} \big)(\mathbf{C}_{\Delta})&=& \frac{X^\Lambda}{\dim \Lambda} \Big( {|\Lambda|-|\Delta|+m_1(\Delta(1^k))\choose m_1(\Delta(1^k))} \mathbf{C}_{\underline{\Delta}_{|\Lambda|}} \Big) \\
\notag
&=& {|\Lambda|-|\Delta|+m_1(\Delta(1^k))\choose m_1(\Delta(1^k))}\frac{|\Lambda|!(k!)^{|\Lambda|}}{ Z_{\underline{\Delta}_{|\Lambda|}}\dim \Lambda}X^{\Lambda}_{\underline{\Delta}_{|\Lambda|}} \\
\notag
&=& \frac{(k!)^{|\Delta|}}{Z_\Delta}\frac{(|\Lambda|\downharpoonright |\Delta|)}{\dim \Lambda} X^{\Lambda}_{\underline{\Delta}_{|\Lambda|}}.
\end{eqnarray}
This suggests that if we consider the algebra of shifted symmetric functions $\mathcal{A}^{k*}$ isomorphic to $\mathcal{A}^k$ with basis the shifted functions $P^{\#}_\Delta$ defined by 
$$P^{\#}_\Delta:=\prod_{\rho\vdash k}P^{\#}_{\Delta(\rho)}(C_\rho)$$
and we set  
$$P^{\#}_\Delta(\Lambda):=\prod_{\rho\vdash k}P^{\#}_{\Delta(\rho)}(C_\rho)(\Lambda(\rho)),$$
for any family of partitions $\Lambda$ then we obtain the following result.

\begin{theoreme}
The linear map $F^k:\mathcal{I}_\infty^k\longrightarrow \mathcal{A}^{k*}$ defined by $$F^k(\mathbf{C}_{\Delta})=\frac{(k!)^{|\Delta|}}{Z_\Delta}P^{\#}_\Delta$$ is an isomorphism of algebras.
\end{theoreme}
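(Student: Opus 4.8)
The plan is to split the statement into a linear-isomorphism part, which is immediate, and a multiplicativity part, which carries all the content. The target $\mathcal{A}^{k*}$ is the $\wp(k)$-fold tensor product of copies of the Okounkov--Olshanski algebra $\mathcal{A}^{1*}$, one copy for each alphabet $C_\rho$ with $\rho\vdash k$; consequently $(P^{\#}_\Delta)_\Delta$ is a linear basis and $F^k(\mathbf{C}_\Delta)=\frac{(k!)^{|\Delta|}}{Z_\Delta}P^{\#}_\Delta$ sends the basis $(\mathbf{C}_\Delta)_\Delta$ of $\mathcal{I}_\infty^k$ to this basis with nonzero scalar coefficients. Hence $F^k$ is automatically a linear bijection, and only the identity $F^k(ab)=F^k(a)F^k(b)$ remains to be established.

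For multiplicativity I would argue pointwise on families of partitions. Each map $F^k_\Lambda=\frac{X^\Lambda}{\dim\Lambda}\circ\psi\circ\Proj_{|\Lambda|}$ is an algebra homomorphism $\mathcal{I}_\infty^k\to\field$: the maps $\Proj_{|\Lambda|}$ and $\psi$ are algebra homomorphisms by construction, and the normalized irreducible character $z\mapsto X^\Lambda(z)/\dim\Lambda$ on $Z(\field[\mathcal{B}_{k|\Lambda|}^k])$ is multiplicative because a central element acts on the irreducible module of character $X^\Lambda$ by the scalar $X^\Lambda(z)/\dim\Lambda$ (Schur's lemma). The crux is then to show that the value of $F^k_\Lambda$ at $\mathbf{C}_\Delta$ is exactly the evaluation at $\Lambda$ of the shifted symmetric function $F^k(\mathbf{C}_\Delta)$, i.e. $F^k_\Lambda(\mathbf{C}_\Delta)=F^k(\mathbf{C}_\Delta)(\Lambda)$. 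Granting this for every $\Lambda$, for $a,b\in\mathcal{I}_\infty^k$ one obtains
\[
F^k(ab)(\Lambda)=F^k_\Lambda(ab)=F^k_\Lambda(a)\,F^k_\Lambda(b)=\big(F^k(a)F^k(b)\big)(\Lambda),
\]
where the last equality uses that evaluation at a fixed family is an algebra morphism of $\mathcal{A}^{k*}$; since a shifted symmetric function on $\wp(k)$ alphabets is determined by its values on families of partitions (the tensor-product version of the single-alphabet statement), this forces $F^k(ab)=F^k(a)F^k(b)$.

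Comparing the two displays above with the computation (\ref{eq_isom}) already carried out, the identity $F^k_\Lambda(\mathbf{C}_\Delta)=F^k(\mathbf{C}_\Delta)(\Lambda)$ is equivalent to
\[
\frac{(|\Lambda|\downharpoonright|\Delta|)}{\dim\Lambda}\,X^\Lambda_{\underline{\Delta}_{|\Lambda|}}=P^{\#}_\Delta(\Lambda)\qquad\text{for }|\Lambda|\geq|\Delta|,
\]
both sides vanishing when $|\Delta|>|\Lambda|$. I would prove this exactly as in the case $k=2$, but alphabet by alphabet, exploiting the factorization $X^\Lambda=\prod_{\rho\vdash k}X^{\Lambda(\rho)}(\chi^\rho)$ and the character formula $\eta_n(\chi^\gamma)(\omega)=\prod_{\rho\vdash k}(\chi^\gamma_\rho)^{l(\omega(\rho))}$. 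Combined with the ordinary branching rule (\ref{branching_rule}), these yield a branching rule for the characters of $\mathcal{B}_{kn}^k$ generalizing (\ref{branching_hyp}): the value $X^\Lambda_{\underline{\Delta}_n}$ expands as a sum over families $N$ with $|N|=|\Delta|$ of $X^N_\Delta$ times $\prod_{\rho\vdash k}f^{\Lambda(\rho)/N(\rho)}$ and a binomial factor distributing the $n-|\Delta|$ added trivial parts among the alphabets. Multiplying through by $(|\Lambda|\downharpoonright|\Delta|)/\dim\Lambda$, applying the vanishing characterization (\ref{characterization}) of the shifted Schur functions in each alphabet, and assembling via the Frobenius-type relation $P_\Delta=\sum_N\overline{X^N_\Delta}\,S_N$ together with its shifted image then collapse the sum to $\prod_{\rho\vdash k}p^{\#}_{\Delta(\rho)}(\Lambda(\rho))=P^{\#}_\Delta(\Lambda)$.

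I expect the main obstacle to be the bookkeeping in this last identity: one must reconcile the global quantities $\dim\Lambda$ and $(|\Lambda|\downharpoonright|\Delta|)$ with the factorized data $\prod_\rho\dim\Lambda(\rho)$, $\prod_\rho(|\Lambda(\rho)|\downharpoonright|\Delta(\rho)|)$ and the combinatorial coefficients produced by the branching rule, just as the factor $n!\,\frac{\dim\lambda_1}{|\lambda_1|!}\frac{\dim\lambda_2}{|\lambda_2|!}$ and the binomial $\binom{n-r}{|\rho_1|-|\nu_1|}$ had to be matched in the $k=2$ proposition. Everything else in the argument is formal, so once this branching rule and the accompanying reconciliation of normalizing factors are in place, the theorem follows.
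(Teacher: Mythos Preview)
Your proposal is correct and follows essentially the same route as the paper: compute $F^k_\Lambda(\mathbf{C}_\Delta)$ via (\ref{eq_isom}), identify this with $P^{\#}_\Delta(\Lambda)$ up to the scalar $\frac{(k!)^{|\Delta|}}{Z_\Delta}$, and conclude. In fact the paper is terser than you are---after the computation (\ref{eq_isom}) it only records the definition of $P^{\#}_\Delta(\Lambda)$ and states the theorem---so your explicit reduction of multiplicativity to the pointwise homomorphisms $F^k_\Lambda$ via Schur's lemma, together with the outline of the general branching rule extending the $k=2$ proposition, supplies details the paper leaves to the reader.
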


\bibliographystyle{plain}
\bibliography{biblio}
\label{sec:biblio}

\end{document}